\newtheorem{thm}{Theorem}[section]
\newtheorem{crl}[thm]{Corollary}
\newtheorem{lmm}[thm]{Lemma}
\newtheorem{prp}[thm]{Proposition}
\theoremstyle{definition}
\newtheorem{dfn}[thm]{Definition}
\newtheorem{exa}[thm]{Example}
\theoremstyle{remark}
\newtheorem*{rem}{Remark}
\title{Tree representations of $ \alpha$-determinantal point processes}
\author{Shota \textsc{OSADA}\footnote{Graduate School of Mathematics, Kyushu University, Fukuoka 
819-0395, Japan.\newline e-mail: \texttt{s-osaad@math.kyushu-u.ac.jp}}
}
\keywords{\textit{$ \alpha$-determinantal point process , $ \alpha$-permanental point process, Tree representations.}}         
\begin{document}
%

\maketitle

\begin{abstract}      
We introduce tree representations for $ \alpha$-determinantal point processes. 
The $ \alpha$-determinantal point processes is introduced in \cite{s-t.jfa} 
as a one parameter extension of the determinantal point process. 
In \cite{o-o.tail}, 
the tree representation was introduced for determinantal point processes. 
In this paper, we prove that the tree representation can be applied to $ \alpha$-determinantal point processes. 
\end{abstract}

\newcommand\DN{\newcommand}\newcommand\DR{\renewcommand}

\DN\lref[1]{Lemma~\ref{#1}}
\DN\tref[1]{Theorem~\ref{#1}}
\DN\pref[1]{Proposition~\ref{#1}}
\DN\sref[1]{Section~\ref{#1}}
\DN\ssref[1]{Subsection~\ref{#1}}
\DN\dref[1]{Definition~\ref{#1}}
\DN\rref[1]{Remark~\ref{#1}} 
\DN\corref[1]{Corollary~\ref{#1}}
\DN\eref[1]{Example~\ref{#1}}
\numberwithin{equation}{section}
\newcounter{Const} \setcounter{Const}{0}


\DN\Ct{\refstepcounter{Const}c_{\theConst}}
\numberwithin{Const}{section}




\DN\cref[1]{c_{\ref{#1}}}	

\DN\R{\mathbb{R}}
\DN\N{\mathbb{N}}
\DN\Q{\mathbb{Q}}
\DN\C{\mathbb{C}}
\DN\Z{\mathbb{Z}}

\DN\map[3]{#1\!:\!#2\!\to\!#3}
\DN\ot{ \otimes } 
\DN\ts{ \times }

\DN\limi[1]{\lim_{#1\to\infty}} 	
\DN\limz[1]{\lim_{#1\to0}}
\DN\limsupi[1]{\limsup_{#1\to\infty}} 	
\DN\limsupz[1]{\limsup_{#1\to0}}
\DN\liminfi[1]{\liminf_{#1\to\infty}} 	
\DN\liminfz[1]{\liminf_{#1\to0}}

\DN\sumii[1]{\sum_{#1=1}^{\infty}}
\DN\sumi[1]{\sum_{#1=0}^{\infty}}

\DN\PD[2]{\frac{\partial#1}{\partial#2}}
\DN\half{\frac{1}{2}}
\DN\Rd{\R ^d}

\DN\bs{\bigskip}
\DN\ms{\smallskip}


\DN\Jli{J _{\ell ,\ii }}
\DN\Jonei{J _{1,\ii }}
\DN\Jtwoi{J _{2,\ii }}
\DN\rfrak{r}

\DN\iii{\bm{i}}
\DN\jjj{\bm{j}}
\DN\iiim{(\iii _1,\ldots,\iii _m)}
\DN\iiM{\ii }

\DN\Fl{\mathfrak{F}_{\ell}}
\DN\ulab{\mathfrak{u}}
\DN\ulabl{\mathfrak{u}_{\ell }}
\DN\ulabi{\mathfrak{u}_{\ell ,\mathsf{i}}}

\DN\FFF{\mathbb{F}} \DN\FFFl{\FFF (\ell )}\DN\FFFz{\FFF (0)}
\DN\Il{I (\ell )}

\DN\mmFliiis{\mmm _{\mm _{\ell , \iii _{\sigma }} }}

\DN\wIIIl{\wIII (\ell )} 
\DN\wIII{\widetilde{\III }}
\DN\mmFl{\mmm _{\FFFl }}

\DN\I{I}
\DN\lambdal{\lambda _{\IIIl }}
\DN\III{\mathbb{I}}
\DN\IIIxl{\III (\ell )_{\mathbf{x}}} 
\DN\IIIl{\III (\ell )} 
\DN\IIl{\mathsf{I}(\ell )}

\DN\AAAA{\mathcal{A}}
\DN\Ai{\mathcal{A}_i } \DN\Aj{\mathcal{A}_j }
\DN\Aii{\mathcal{A}_{i'} } \DN\Ajj{\mathcal{A}_{j'} }
\DN\Aiii{\mathcal{A}_{i^-} } \DN\Ajjj{\mathcal{A}_{j^-} }
 \DN\Aiin{\mathcal{A}_{\ii _n}} 
 \DN\Ali{\AAAA _{\ell , \ii } } 
 \DN\Aliii{\AAAA _{\ell , \ii ^-} } 
 \DN\Azi{\AAAA _{0 , \ii } } 
\DN\iim{(\ii _1,\ldots,\ii _m)}

\DN\B{\mathcal{B} }
\DN\Bi{\B _{\ii }} 
\DN\Bl{\B _{\ell }}
\DN\Bli{\B _{\ell , \ii }}
\DN\Blin{\B _{\ell , \ii _n}}
\DN\Blj{\B _{\ell , \jj }}
\DN\Bj{\B _{\jj }}
\DN\Bljn{\B _{\ell , \jj _n}}

\DN\Gl{\mathfrak{G}_{\ell }} \DN\Gll{\mathfrak{G}_{\ell +1}}

\DN\fliirs{f_{\ell ,\ii _{r,s}}}
\DN\fliir{f_{\ell ,\ii _r}}
\DN\flii{f_{\ell ,\ii }}\DN\fljj{f_{\ell , \jj }}
\DN\fliip{f_{\ell ,\iip }}
\DN\fliiq{f_{\ell ,\iiq }}
\DN\fliipsigma{f_{\ell ,\ii _{\sigma (p )} }}
\DN\fljjq{f_{\ell , \jjq }}
\DN\fljjp{f_{\ell , \jjp }}

\DN\fljjr{f_{\ell , \jj _r}}
\DN\fljjrs{f_{\ell , \jj _{r,s}}}
\DN\fzii{f_{0 ,\ii }}\DN\fzjj{f_{0 , \jj }}

\DN\fii{f_{\ii }}\DN\fjj{f_{\jj }}
\DN\fiil{f_{\ii '}}
\DN\fliil{f_{\ell , \ii '}}
\DN\ii{i}\DN\jj{j}

\DN\iip{\ii _p}
\DN\jjp{\jj _p}
\DN\jjq{\jj _q}
\DN\iiq{\ii _q}

\DN\rhoF{\rho _{\mathbb{F}}}
\DN\rhoFl{\rho _{\FFFl }}

\DN\rhoZ{\rho _{\Gz }}
\DN\rhol{\rho _{\Gl }}
\DN\nuF{\nu_{\mathbb{F}}}
\DN\nuFl{\nu_{\FFFl }}
\DN\KF{\K _{\mathbb{F}}}
\DN\KFl{\K _{\FFFl }}
\DN\KR{\K _{R}}
\DN\f{\mathbf{f}}
\DN\Det{\mathrm{Det}}

\DN\aaa{\mathsf{a}}
\DN\ijn{{_{i,j=1}^{n}}}
\DN\mub{\mu _{\mathrm{Be},\alpha }}
\DN\rgn{\rho _{\mathrm{Gin}}}
\DN\kg{\mathsf{K}_{\mathrm{Gin}}}

\DN\mug{\mu _{\mathrm{Gin}}}
\DN\mugx{\mu _{\mathrm{Gin},x}}
\DN\muSinb{\muone _{\mathrm{Sin}, \beta }}
\DN\muAi{\mu _{\mathrm{Ai}}}
\DN\lab{\mathfrak{l}}


\DN\rairybeta{\rho _{\mathrm{Ai},\beta }}
\DN\rairytwo{\rho _{\mathrm{Ai}}} 
\DN\rNone{\rho ^{\n ,1}}
\DN\rNtwo{\rho ^{\n ,2}}
\DN\rNnk{\rho ^{\n ,n+k}}

\DN\rhobar{\bar{\rho }}
\DN\rb{\rho _{\beta }^n}
\DN\rbone{\rho _{\beta }^1}
\DN\rbx{\rho _{1 }^n}
\DN\rby{\rho _{2 }^n}
\DN\rbz{\rho _{4 }^n}

\DN\rbNone{\rho _{\beta }^{\n ,1}}
\DN\rbNtwo{\rho _{\beta }^{\n ,2}}
\DN\rbNxone{\rho _{\beta ,x}^{\n ,1}}
\DN\rbNxtwo{\rho _{\beta ,x}^{\n ,2}}
\DN\rbNxn{\rho _{\beta ,x}^{\n ,n}}
\DN\rN{\rho ^{\n ,n}}
\DN\rbN{\rho _{\beta }^{\n ,n}}
\DN\rbNN{\rho _{\beta }^{\n ,N}}
\DN\rbNx{\rho _{1}^{\n ,n}}
\DN\rbNy{\rho _{2}^{\n ,n}}
\DN\rbNz{\rho _{4}^{\n ,n}}


\DN\vxNi{v _{\xNi }} 	
\DN\vyNi{v _{\yNi }}
\DN\GN{G_{\nN }} 	
\DN\GNN{G_{\nN }\ts \N } 	
\DN\CGN{\C ^{\GN\ts \N }}
\DN\CN{\C ^{\N }} 	\DN\RN{\R ^{\N }}
\DN\xyGN{[x]_{\nN }, \, [y]_{\nN } \in \GN }
\DN\xGN{[x]_{\nN }  \in \GN }
\DN\yGN{[y]_{\nN }  \in \GN }

\DN\vv{\mathbf{v}}
\DN\axyN{\mathsf{K}_{x,y}^{\nN }}
\DN\MMN{\mathsf{M}^{\nN }}
\DN\MN{\mathsf{M}^{\nN }}
\DN\LLN{\LL ^{\nN }}
\DN\LLNx{\LLN _{x}}
\DN\LLNy{\LLN _{y}}
\DN\LLxNi{\LL _{\xNi }}
\DN\LLyNi{\LL _{\yNi }}
\DN\LLxi{\LL _{\xi }}
\DN\LLeta{\LL _{\eta }}

\DN\T{\mathbf{T}}
\DN\Tl{\T _{\ell }}
\DN\Tll{\T _{\ell '}}
\DN\TTT{\mathbb{T}}
\DN\Tail{\mathrm{Tail} }
\DN\nN{\mathsf{N}}
\DN\IN{I_{\nN }}
\DN\sS{\mathrm{S}}
\DN\sSS{\mathfrak{S}}
\DN\sss{\mathsf{s}}
\DN\K{\mathsf{K}}
\DN\Kl{{\mathsf{K}}_{\ell }}
\DN\KKK{\mathsf{K}} 
\DN\LL{\mathsf{L}}
\DN\mm{ f }
\DN\mmm{\lambda}
\DN\xN{[x]_{\nN }}
\DN\yN{[y]_{\nN }}

\DN\TailS{\Tail (\Ssf )}
\DN\Path{\mathsf{Path}}
\DN\Tree{\mathsf{Tree}}
\DN\Da{\underline{\Delta }}\DN\Db{\underline{\Delta }'}
\DN\DDa{\{\Dl \}_{\ell \in \{ 0 \}\cup \N} }

\DN\Ssf{\mathsf{S}}
\DN\rrr{\mathsf{r}}
\DN\Tf{\Tree _{\mathrm{fin}}}

\DN\rr{\mathsf{r}}
\DN\eN{\ell \in \N }
\DN\nnn{\mathsf{n}}
\DN\nNN{\nN _{\nnn }}
\DN\muN{\mu ^{\nnn }}

\DN\li{_{\ell }(i)}
\DN\kl{_{\ell }}
\DN\kll{_{\ell +1}}
\DN\kkl{_{\ell }}
\DN\Kli{\K _{\ell ,\infty}} 
\DN\Ki{\K _{\infty }}
\DN\Klbar{\K  \kl }
\DN\KDkl {\K \kl }
\DN\ml{\mm \kl } 
\DN\mlv{\mm _{\ell , v} } 
\DN\mli{\mm _{\ell }(i)} \DN\mlj{\mm _{\ell }(j)}

\DN\ff{\mathfrak{f}}

\DN\Klt{\widetilde{\K }_{\ell }}

\DN\Dz{\Delta (0)}\DN\Done{\Delta (1)}
\DN\Dl{\Delta (\ell ) } \DN\Dll{\Delta (\ell +1) }  
\DN\Di{\Delta _{\infty }  } 
\DN\n{\mathsf{n}}
\DN\nd{\n \Delta }
\DN\hatx{\hat{x}}
\DN\Sr{\Sit _r}
\DN\Sit{S}
 
\DN\mmF{\mmm _{\mm _{\ell }}}  

\DN\mmFiii{\mmm _{\mm _{\ell ,\iii }}} 
\DN\mmFiiii{\mmm _{\mm _{\ell , (\ii _1,\ldots,\ii _m)}} }
\DN\mmFiiin{\mmm _{\mm _{\ell , \iii _n}} }

\DN\mmFi{\mmm _{\mm _{\ell , \iiM }}}
\DN\mmFin{\mmm _{\mm _{\ell , \iiM _n }}}
\DN\mmFii{\mmm_{\mm _{\ell , \mathsf{i}}} }

\DN\muFi{\mu _{\FFFl ,\iiM }}
\DN\muFiii{\mu _{\FFFl ,\iii }}
\DN\muk{\mu }
\DN\muz{\mu _{\Dz }}

\DN\mul{\mu |_{\Gl }} 
\DN\mulsA{\muk ( \mathsf{A} | \Gl  )(\mathsf{s}) }
\DN\mulA{\muk ( \mathsf{A} | \Gl  )} 
\DN\mulsU{\muk ( U | \Gl  )(\mathsf{s}) }

\DN\muls{\muk (\cdot | \Gl  )}
\DN\muF{\mu _{\mathbb{F}}}
\DN\nut{\widetilde{\nu }}
\DN\nul{\nuFl } 
\DN\nuli{\nu _{\ell ,\infty } } 
\DN\nulit{\nut _{\ell ,\infty } } 
\DN\nult{\nut _{\ell }}
\DN\nulc{\check{\nu }_{\ell }}

\DN\SSi{\Ssf _{\infty }}
\DN\Omegat{\widetilde{\Omega}}
\DN\Omegac{\check{\Omega }} \DN\Omegacl{\check{\Omega }_{\ell }}
\DN\tauc{\check{\tau }}
\DN\FF{\mathsf{F}}

\DN\kappal{\kappa _{\ell ,\mathsf{i}}}
\DN\iotal{\iota_{\ell} }
\DN\etai{\eta _{\infty} }
\DN\iotai{\iota _{\infty} }

 \DN\dist{\mathsf{d}}

\DN\0{\rhoFl ^m (\iii )}
\DN\1{\Omega }
\DN\2{\mathrm{Conf}(\1 )}
\DN\3{\overline{\Omega}(\ell )}
\DN\4{\underline{\Omega}(\ell )}
\DN\5{\mu _{\FFFl ,\mathsf{i}}}
\DN\6{\FFFl = \{ \flii \}_{\ii \in\IIIl }}
\DN\7{ \nul \diamond \mmFl }
\DN\8{ ( \nul \diamond \mmF ) \circ \ulabl ^{-1}|_{\Gl } }
\DN\9{\prod_{n=1}^m \Blin }


\DN\deta{\text{det}_{\alpha}}
\DN\sgn{\alpha ^{n-\nu(\sigma)}}

\DN\p{p}
\DN\q{q}

\DN\mmml{\mmm | _{\Fl}}
\DN\Flm{\Fl ^m}

\DN\iit{\mbox{\boldmath$ i$}}
\DN\KRBn{\KR ^{\B,n}}%
\DN\KBn{\K ^{\B,n}}%
\DN\KRBm{\KR ^{\B,m}}%
\DN\KKRBn{\mathsf{L}_{R}^{\B ,n}}%
\DN\LRBmn{\mathsf{L}_{R}^{\B ,m,n}}%
\DN\KBm{\K ^{\B,m}}%
\DN\Bm{\B ^m}%

\DN\A{\mathbb{A}}
\DN\KAn{\K ^{\A , n}}
\DN\KAk{\K ^{\A , k}}
\DN\KAm{\K ^{\A ,m}}
\DN\KRAm{\KR ^{\A ,m}}
\DN\KRAn{\KR ^{\A ,n}}
\DN\KRAk{\KR ^{\A ,k}}

\DN\LR{\mathsf{L}_{R}}
\DN\LRAkn{\LR ^{\A , k , n}}
\DN\s{s}


\section{Introduction}\label{s:1} 
Our aim is to introduce tree representations for $ \alpha$-determinantal point processes (also called the $ \alpha$-permanental point processes). 
Let $ \Sit $ be a locally compact Hausdorff space with countable basis. 
We equip $ \Sit $ with a Radon measure 
$ \mmm $ such that $ \mmm (\mathcal{O} ) > 0 $ 
for any non-empty open set $ \mathcal{O} $ in $ \Sit $. 
Let $ \Ssf  $ be the configuration space over $ \Sit $ 
(see \eqref{:20a} for definition). 
$ \Ssf  $ is a Polish space equipped with the vague topology.

An $ \alpha$-determinantal point process $ \mu $ on $ \Sit $ 
is a probability measure on $ (\Ssf  ,\mathfrak{B}(\Ssf  ) )$ 
for which the $ m $-point correlation function $ \rho ^m $ 
with respect to $ \mmm $ is given by 
\begin{align}\label{:10a}&
\rho ^m (\mathbf{x}) = \deta [\mathsf{K}(x_i,x_j)]_{i,j=1}^m 
.\end{align}
Here 
$ \map{\K }{\Sit \ts \Sit }{\mathbb{C}}$ is a measurable kernel, 
$ \mathbf{x} = (x_1,\ldots,x_m )$, and 
for $ m \times m $ matrix $ A=(a_{i,j})_{i,j=1}^{m}$ 
\begin{align}\label{:11a}&
\deta A = \sum _{ \sigma \in \mathfrak{S}_m } \alpha ^{m-\nu(\sigma)} \prod _{i=1}^{m}a_{i , \sigma (i)}
,\end{align}
where $ \alpha$ is a real number, 
the summation is taken over the symmetric group $ \mathfrak{S}_m $, 
the set of permutations of $ \{ 1 , 2 , \ldots , m \} $, and 
$ \nu (\sigma) $ is the number of cycles of the permutation $ \sigma$. 
$ \mu$ is said to be $ \alpha$-determinantal point process 
associated with $ (\K , \mmm )$.

The quantity \eqref{:11a} is 
called the $ \alpha$-determinant in \cite{s-t.jfa} and 
also called the $ \alpha$-permanent in \cite{VJ.alpha-det.88, VJ.alpha-det.97}. 
For $ \alpha = -1$, 
$ \det _{-1} A $ is the usual determinant $ \det A$ and 
$ \mu $ is called a determinantal point process (also called a fermion point process). 
For $  \alpha = 1$, 
$ \det _{1} A $ is the permanent $ \text{per} A$ and 
$ \mu $ is called a permanental point process 
(also called a boson point process). 
Letting $ \alpha $ tend to $ 0$, one obtain the Poisson point processes. 
Hence the $ \alpha $-determinantal point process is an one parameter extension of the determinantal point process.

We set $ \KKK f (x)= \int_{\Sit } \K (x,y) f (y) \mmm (dy)$. 
We regard $ \K $ as an operator on $ L^2 (\Sit ,\mmm )$ and denote it by the same symbol. 
We say $ \KKK $ is of locally trace class if 
\begin{align}\label{:111a}&
\KKK _{A} f (x)= \int 1_{A} (x) \K (x,y) 1_{A} (y)f(y)\mmm (dy)
\end{align}
is a trace class operator on $ L^2(\Sit , \mmm )$ 
 for any compact set $ A $. 
Throughout this paper, we assume: 

\smallskip 

\noindent 
\thetag{A1} 
$ \alpha \in \{ \frac{2}{m} ; m \in \N \} \cup \{ \frac{-1}{m}; m \in \N \} $. 
$ \K $ is Hermitian symmetric and 
of locally trace class 
and 
$ \mathrm{Spec}(\K ) \subset [0,\infty ) $
. 
If $ \alpha < 0 $, $ \mathrm{Spec}(\K ) \subset [0,-\frac{1}{\alpha}] $. 

\smallskip 

From \thetag{A1} we deduce that 
 the associated $\alpha$-determinantal point process $ \mu = \mu ^{\K ,\mmm, \alpha }$ 
exists and is unique \cite{s-t.jfa}.

A $ \mmm $-partition $ \Delta =\{ \mathcal{A}_i  \}_{i\in I } $ of $ \Sit $ is a countable collection of disjoint relatively compact, measurable subsets of $ \Sit $ such that $ \cup_i \mathcal{A}_i = \Sit  $ and that 
$ \mmm ( \mathcal{A}_i ) > 0 $ for all $ i \in I $. 
For two partitions $ \Delta =\{ \mathcal{A}_i  \}_{i\in I } $ and 
$ \Gamma=\{ \mathcal{B}_{j}  \}_{j\in J } $, we write 
$ \Delta \prec \Gamma $ if for each $ j \in J $ there exists $ i \in I $ 
such that $ \mathcal{B}_{j}  \subset \mathcal{A}_i $. 
We assume: 

\medskip

\noindent 
\thetag{A2} There exists a sequence of $ \mmm $-partitions 
$ \{ \Delta (\ell ) \}_{\ell \in \N } $ satisfying 
\eqref{:10b}--\eqref{:10d}. 
\begin{align}  \label{:10b}&\quad \quad \quad 
\Dl \prec \Dll \quad \text{ for all } \eN 
,\\ 
\label{:10c}&\quad \quad \quad 
\sigma [\bigcup_{\eN }\Fl ] = \mathfrak{B}(\Sit ) 
,
\\\label{:10d}& \quad \quad \quad 
\# \{ \jj ; \AAAA _{\ell + 1,\jj } \subset \Ali \} = 2 
\text{ for all } \ii \in \Il \text{ and } \ell \in \N 
,\end{align}
where we set 
$ \Dl = \{ \Ali  \}_{i\in \Il } $ and 
$ \Fl := \mathfrak{F}_{\Dl } = \sigma [ \Ali ; i\in \Il  ]$. 

\ms 

Condition \eqref{:10d} is just for simplicity. 
This condition implies that the sequence $ \{ \Delta (\ell ) \}_{\ell \in \N } $ 
has a binary tree-like structure. 
We remark that \thetag{A2} is a mild assumption and, indeed, 
satisfied if $ \Sit $ is an open set in $ \Rd $ 
and $ \mmm $ has positive density with respect to the Lebesgue measure. 

Let $ \Gl $ be the sub-$ \sigma $-field of $ \mathfrak{B} (\mathsf{S})$ given by 
\begin{align}\label{:20e}& 
\Gl  =  \sigma [ \{\sss \in \mathsf{S}; \sss (\Ali ) = n 
\}; i\in \Il , n \in \mathbb{N} ]
.\end{align}
Combining \eqref{:10b} and \eqref{:10c} with \eqref{:20e}, we obtain 
\begin{align}\label{:20f}& 
\Gl \subset \Gll ,\ \quad 
\sigma [\Gl; \ell \in \N ] = \mathfrak{B}(\Ssf  ) 
.\end{align}
Let $ \muls  $ be the regular conditional probability of 
$ \muk $ with respect to $ \Gl  $. 


We can naturally regard $ \Dl = \{ \Ali  \}_{i \in \Il } $
 as a discrete, countable set 
with the interpretation that each element $ \Ali  $ is a point. 
Thus, $ \muls $ can be regarded as a point process 
on the discrete set $ \Dl $. 

In \sref{s:2} we introduce a sequence of 
{\it fiber bundle-like sets} $ \IIIl $ ($\ell \in \N )$ 
with base space $ \Dl $ with fiber consisting of a set of binary trees. 
We further expand $ \IIIl $ to $ \Omega (\ell )$ in \eqref{:22t}, which has a fiber 
whose element is a product of a tree $ \ii $ and a component 
$ \Bli $ of partitions. See notation after \tref{l:21}. 

Let $ \mul $ denote the restriction of $ \mu $ on $ \Gl $. 
By construction $ \mul (\mathsf{A}) = \mulA $ for all $ \mathsf{A} \in \Gl $. 
In \tref{l:21} and \tref{l:22}, we construct a lift $ \7 $ of $ \mul $ 
on the fiber bundle $ \Omega (\ell )$ in \eqref{:22t}.

The key point of the construction of the lift $ \7 $ is that we construct 
a consistent family of orthonormal bases $ \6  $ 
in \eqref{:21j} and \eqref{:21k}, and introduce the kernel 
 $ \KFl $ on $ \IIIl $ in \eqref{:21v} such that 
\begin{align}\tag{\ref{:21v}}&
\KFl (\ii ,\jj ) = 
\int_{\Sit \ts \Sit } \K (x,y) 
{\flii  (x) } \fljj  (y) \mmm (dx)\mmm (dy) 
.\end{align}
We shall prove in \lref{l:32} that $ \KFl $ is an $ \alpha$-determinantal kernel on $ \IIIl $, 
and present $ \nuFl $ as the associated $\alpha$-determinantal point process on $ \IIIl $. 
To some extent, $ \nuFl $ is a Fourier transform of $ \mul $ through the orthonormal basis $ \6  $. 
We shall prove in \tref{l:21} that their correlation functions 
$ \rhol ^m $ and $ \rhoFl ^m $ satisfy a kind of Parseval's identity: 
\begin{align}\tag{\ref{:21b}}&
\int_{\mathbb{A}} \rhol ^m(\mathbf{x}) \mmm ^{m}(d\mathbf{x}) = 
\sum_{\iii \in \III_{\ell }(\mathbb{A}) } 
\rhoFl ^m (\iii ) 
,\end{align}
which is a key to construct the lift  $ \7 $.

Vere-Jones \cite{VJ.alpha-det.88, VJ.alpha-det.97} 
introduced $ \alpha$-permanent (we call it $ \alpha$-determinant as refereed in  \cite{s-t.jfa})
as the coefficients which arise in 
expanding fractional powers of the characteristic polynomial of a matrix. 
Shirai-Takahashi \cite{s-t.jfa} 
introduced the $ \alpha$-determinantal point processes. 
Their correlation functions are given by $ \alpha$-determinants of a kernel function. 
In the case $ \alpha=-1$, the associated point process is the determinantal point processes \cite{hkpv.gaf, RL.03, RL.14, sosh, s-t.jfa, s-t.aop}. 
The condition \thetag{A1} is a part sufficient condition for the existence and uniqueness of $ \alpha$-determinantal point process in \cite{s-t.jfa}. 

In \cite{o-o.tail}, we introduced the tree representations for determinantal point processes on a continuum space under the assumption \thetag{A1} in the case $ \alpha=-1$ 
and 
proved tail triviality by applying it. 
In this paper, we prove that the tree representations work for the $ \alpha$-determinantal point processes. 
Most statements in this paper are then the same as \cite{o-o.tail} except for the range of $ \alpha$. 
In particular, \lref{l:31} and \lref{l:33} correspond to Lemma 1 and Lemma 3 in \cite{o-o.tail}, respectively. 

The key idea is that $ \KFl $ in \eqref{:21v} is given
by a unitary operator 
$ U :  L^2 (\Sit , \mmm ) \rightarrow L^2(\IIIl , \lambdal  )$ such that $ \K = U \KFl U^{-1} $. 
Hence $ \KFl $ has the same spectrum of $ \K $ and satisfies \thetag{A1}.

The organization of the paper is as follows. 
In \sref{s:2}, we give definitions and concepts and state the main theorems (Theorems \ref{l:21}--\ref{l:23}). We give tree representations of $ \mu $. 
In \sref{s:3}, we prove \tref{l:21}. 
In \sref{s:4}, we prove \tref{l:22} and \tref{l:23}. 

\section{Tree representations}\label{s:2}

In this section, we recall various essentials and present the main theorems 
\tref{l:21}--\tref{l:23}. 

A configuration space $ \Ssf  $ over $ \Sit $ 
is a set consisting of configurations on $ \Sit $ such that 
\begin{align}& \label{:20a}
\Ssf  = \{ \sss \, ;\,   \sss =\sum_i \delta _{s_i},\, 
 s_i \in \Sit ,\, 
\sss (K) < \infty \text{ for any compact } K \} 
.\end{align}
A probability measure $ \mu $ on $ ( \Ssf  , \mathfrak{B}(\Ssf  ) )$ 
is called a point process, also called random point field. 
 A symmetric function $\rho ^m $ on $\Sit ^m$ is called 
the $ m $-point correlation function of a point process $ \mu $ 
with respect to a Radon measure $ \mmm $ if it satisfies 
\begin{align} \label{:20b}
\int_{\Ssf  } 
\prod_{i=1}^j \frac{\sss (A_i)!}{(\sss (A_i) - k_i)!} 
\mu(d\sss ) 
&= \int_{A_1^{k_1} \times \cdots \times A_j^{k_j}} 
\rho ^m (\mathbf{x}) \mmm ^m(d\mathbf{x})
.\end{align}
Here $A_1, \dots, A_j \in \mathfrak{B}(\Sit )$  are disjoint and 
$k_1,\dots, k_j \in \N $ such that $k_1+\cdots + k_j = m $. 
If $\sss (A_i) - k_i \le 0$, we set 
${\sss (A_i)!}/{(\sss (A_i) -k_i)!}=0$.

Let $ \Dl =\{ \Ali  \}_{i\in \Il  } $ be as in \thetag{A2}, where 
 $ \ell \in \N $. 
We set $ \Delta =  \{ \mathcal{A}_{\ii }  \}_{\ii \in I } $ such that 
 \begin{align}\notag &
 \Delta  = \Done , \quad \mathcal{A}_{\ii }=\mathcal{A}_{1,\ii  }  \quad I = I (1)
 .\end{align}
In consequence of  \eqref{:10d}, we assume 
without loss of generality that each element $ \ii  $ of 
the parameter set $ \Il $ is of the form 
\begin{align}\label{:20g}&
\Il = \I \ts \{ 0,1 \}^{\ell -1}
.\end{align}
That is, each $ \ii \in \Il $ is of the form 
$ \ii  = (j_1,\ldots,j_{\ell }) \in \I \ts \{ 0,1 \}^{\ell -1} $. 
We take a label $ \ii \in \cup_{\ell = 1}^{\infty} \Il $ in such a way that, 
for $ \ell < \ell ' $, $ \ii \in \Il $, and $ \ii ' \in I (\ell ' )$, 
\begin{align}\notag &
\Ali  \supset \AAAA _{\ell ', \ii '}  
\Leftrightarrow 
\ii =(j_1,\ldots,j_{\ell }) \text{ and }
\ii '=(j_1,\ldots,j_{\ell },\ldots,j_{\ell '})
.\end{align}
We denote by $ \wIII $ the set of all such parameters:
\begin{align}\label{:20i}&
\wIII  = \sumii{\ell } \Il = \sumii{\ell }  \I  \ts \{ 0,1 \}^{\ell -1} 
.\end{align}
We can regard $ \wIII $ as 
a collection of binary trees and $ \I  $ is the set of their roots.

For $ \ii  = (j_1,\ldots,j_{\ell }) \in \wIII $, we set 
$ \mathrm{rank}(\ii ) = \ell $. 
For $ \ii $ with $ \mathrm{rank}(\ii ) = \ell $, 
we set 
\begin{align}\label{:20j} &\Bi  =
\begin{cases}
 \mathcal{A}_{1,\ii } & \ell = 1,\\
\mathcal{A}_{\ell -1, \ii ^-}  
&  \ell \ge 2
,\end{cases}
\end{align}
where $ \ii ^- = (j_1,\ldots,j_{\ell -1})$ for 
$ \ii = (j_1,\ldots, j_{\ell })\in \Il $. 
Let $ \III \subset \wIII $ such that 
\begin{align}\label{:20k}&
\III = \I +  \sum_{\ell = 2}^{\infty}
\{ \ii  \in \Il ; j_{\ell }=0   \}
,\end{align}
where $ \ii = (j_1,\ldots,j_{\ell }) \in \Il $. 

Let $ \mathbb{F} = \{ \fii \}_{\ii \in\mathbb{I} } $ 
be an orthonormal basis of $ L^2(\Sit ,\mmm ) $ satisfying 
\begin{align}
& && \label{:20l}
\sigma [\fii ; \ii \in \III ,\, \mathrm{rank} (\ii ) = \ell ] = \Fl 
&&\text{ for each $ \ell \in \N $} 
,\\ \label{:20m}&&&
\mathrm{supp}(\fii ) 
= 
\Bi  
&& \text{ for each $ \ii \in \III $}
,\\
\label{:20n}&&&
\fii  (x) = 
1_{\Ai }(x)/\sqrt{\mmm (\Ai )}
&&\text{ for $ \mathrm{rank} (\ii ) = 1$}
.\end{align}
For a given sequence of $ \mmm $-partitions 
satisfying \thetag{A2}, such an orthonormal basis exists. 
We present here an example. 

\begin{exa}[Haar functions] \label{d:22} 
Typically we can take $ \Sit = \R $, $ \mmm (dx)= dx $, and $ \I  = \mathbb{Z}$. 
For $ \ii = (j_1,\ldots,j_{\ell }) \in \Il $, we set 
$ \Jonei = j_1 $ and, for $ \ell \ge 2 $, 
\begin{align}\label{:20q}&
\Jli  = j_1+\sum_{n=1}^{\ell - 1} \frac{j_n}{2^{n}}
.\end{align}
We take $ \Ali = [\Jli , \Jli  + 2^{-\ell +1})$.

Let $ \ii = (j_1,\ldots,j_{\ell }) \in \III $. We set for, $ \ell = 1 $ and $ \ii = (j_1)$, 
\begin{align}\notag & 
f_{\ii } (x) = 
1_{[j_1,j_1+1)}  (x) 
\end{align}
and, for $ \ell \ge 2 $ and $  \ii = (j_1,\ldots,j_{\ell }) \in \III $, 
\begin{align}\notag & 
f_{\ii } (x) = 
2^{(\ell -1)/2 } \{ 1_{[\Jli , \Jli  + 2^{-\ell +1})}(x) 
- 
1_{[\Jli + 2^{-\ell +1}, \Jli  + 2^{-\ell +2 })}(x) \}  
.\end{align}
We can easily see that $ \{ \fii  \}_{\ii \in \III } $ is an orthonormal basis of 
$ L^2(\R , dx )$. 
We remark that $ j_{\ell } = 0 $ because $  \ii = (j_1,\ldots,j_{\ell }) \in \III $ as we set in \eqref{:20k}.  
\end{exa}

We next introduce the $ \ell $-shift of above objects 
such as $ \III $, $ \Bi $, and $ \mathbb{F} = \{ \fii \}_{\ii \in\mathbb{I} } $. 
Let $ \wIII (1) = \wIII $ and, for $ \ell \ge 2 $, we set 
\begin{align}\label{:21g}
\quad \quad \wIIIl := &
 \sumii{\rfrak } 
\Il \ts \{ 0,1 \}^{\rfrak -1 } 
,\end{align}
where $ \Il =  \I \ts \{ 0,1 \}^{\ell -1}$ is as in \eqref{:20g}.  
For $ \ell , \rfrak \in  \N  $, we set 
$ \map{\theta _{\ell -1 ,\rfrak }}{\wIII }{ \wIIIl }$ 
such that 
$ \theta _{0,\rfrak } = \mathrm{id}$ ($ \ell = 1$) and, for $ \ell \ge 2$,   
\begin{align}\label{:21h}&
\theta_{\ell -1,\rfrak } ((j_1,\ldots,j_{\ell + \rfrak -1 })) 
= ( \mathbf{j}_{\ell }, j_{\ell +1 }, \ldots,j_{\ell + \rfrak -1 }) 
\in \Il  \ts \{ 0,1 \}^{\rfrak -1 } 
,\end{align}
where $ \bm{j}_{\ell }=(j_1,\ldots,j_{\ell }) \in \Il $. 
For $ \ell = 1$, we set $ \III (1) = \III $. For $ \ell \ge 2 $, we set 
\begin{align}\label{:21i}&
\IIIl  = \Il + 
\sum_{\rfrak = 2}^{\infty}  \theta_{\ell -1 ,\rfrak }  (\III )
.\end{align}

We set $ \mathrm{rank}(\ii ) = r $ for $ \ii \in  \Il  \ts \{ 0,1 \}^{\rfrak -1 } $. 
By construction $ \mathrm{rank}(\ii ) = r $ 
for $ \ii \in \theta_{\ell -1,\rfrak } (\wIII )$. 
Let $ \6  $ such that, 
for $ \rfrak = \mathrm{rank} (\ii ) $,  
\begin{align}\label{:21j}&&&\quad \quad 
\flii  (x) = 
1_{\Ali }(x)/\sqrt{\mmm (\Ali )}
&&\text{ for } \rfrak = 1
,\\
&&& \label{:21k} \quad \quad  \flii (x) = 
f_{ \theta_{\ell -1 ,\rfrak } ^{-1}(i)} (x) 
&&
\text{ for } \rfrak  \ge 2 
,\end{align}
where $ \Dl = \{ \Ali  \}_{i\in\Il } $ is given in \thetag{A2}. 
Then $ \6  $ 
is an orthonormal basis of $ L^2(\Sit ,\mmm ) $. 
This follows from assumptions \eqref{:21j} and \eqref{:21k} and 
the fact that $ \FFF = \{ \fii \}_{\ii \in\mathbb{I} } $ 
is an orthonormal basis.

\begin{rem}\label{r:21} \thetag{1} 
We note that 
$ \flii \in \FFFl $ is a newly defined function 
if $ \mathrm{rank}(\ii ) = 1 $, whereas 
$ \flii \in \FFFl $ is an element of $ \FFF $ if 
$ \mathrm{rank}(\ii ) \ge 2 $. 
In particular, we see that 
\begin{align}\label{:21l}&
\{ \flii  \}_{\ii \in \IIIl , \, \mathrm{rank}(\ii )\ge 2} 
\subset 
\{ \fii  \}_{\ii \in \III , \, \mathrm{rank}(\ii )\ge 2} 
.\end{align}
\thetag{2} 
Let 
$ \jj = (j_1,\ldots,j_{\ell + \rfrak -1})\in\III $ 
and 
$ \ii  = (\bm{j}_{\ell }, j_{ \ell + 1 },\ldots,j_{\ell + \rfrak -1}) \in \IIIl $. 
Then 
$$ \jj =  \theta_{\ell -1 ,\rfrak } ^{-1}(\ii )
.$$
Furthermore, $ \flii  \in\FFFl  $ and $ \fjj \in  \FFF $ satisfy $ \flii = \fjj   $ 
for $ r=\mathrm{rank} (\ii ) \ge 2 $. 
\\
\thetag{3} By construction, we see that 
\begin{align} 
& && \label{:21s}
\sigma [\flii ; \ \ii \in \IIIl , \, \mathrm{rank} (\ii ) = r ] = 
\mathfrak{F}_{\ell -1 + r}  
&&\text{ for each $ \ell , r \in \N $} 
,\\ \label{:21t}&&&
\mathrm{supp}(\flii ) 
 = \Bli 
&&
\text{ for all } \ii \in \IIIl 
,\end{align}
where we set, for $  \jj = \theta_{\ell -1 ,\rfrak } ^{-1}(\ii )$ 
such that $ \mathrm{rank}(\ii ) = r $, 
\begin{align}\label{:21u}&
\Bli  = \Bj  
.\end{align}
\end{rem}

Using the orthonormal basis 
$ \6  $, 
we set $ \KFl $ on $ \IIIl $ by 
\begin{align}\label{:21v}&
\KFl (\ii ,\jj ) = 
\int_{\Sit \ts \Sit } \K (x,y) 
{\flii  (x) } \fljj  (y) \mmm (dx)\mmm (dy) 
.\end{align}
Let $ \lambdal $ be the counting measure on $ \IIIl $. 
We shall prove in \lref{l:32} that $ (\KFl , \lambdal )$ satisfies \thetag{A1}. 
Hence we obtain the associated $\alpha$-determinantal point process $ \nuFl $ on $ \IIIl $ 
from general theory \cite{s-t.jfa}.

For $ \iiM  \in \IIIl $, let $ \mmFi  (dx ) $ be 
the probability measure on $ \Sit $ such that 
\begin{align}\label{:21w}&
\mmFi  (dx) =  | \flii (x)|^2 \mmm (dx) 
.\end{align}
For $ \iii = (\ii _n)_{n=1}^m \in \IIIl ^m $ and 
$ \mathbf{x} = (x_n)_{n=1}^m  $, where $ m \in \N \cup \{ \infty  \} $, we set 
\begin{align}\label{:21x}&
\mmFiii  (d\mathbf{x}) = \prod_{n=1}^m 
 |f_{\ell ,\ii  _n }(x_n)|^2 \mmm (dx_n) 
.\end{align}
By \eqref{:21k} $ \mmFiii $ is a probability measure on $ \Sit ^m $. 
By \eqref{:21t}, we have 
\begin{align}& \label{:21y}
\mmFiii   ( \9 ) = 1 
.\end{align}

Let $ \Flm = \sigma [ \AAAA _{\l ,\ii _1} \times \cdots \times \AAAA _{\l ,\ii _m}; \ii _{n} \in \Il , n=1,\ldots,m]$. 
Let $ \Gl $ be the sub-$ \sigma $-field  as in \eqref{:20e}.  
An $ \Flm $-measurable symmetric function $\rhol ^m $ on $\Sit ^m$ 
is called 
the $ m $-point correlation function of $ \mul $ 
with respect to $  \mmm $ if it satisfies 
\begin{align} \label{:21z}
\int_{\Ssf  } 
\prod_{i=1}^j \frac{\sss (A_i)!}{(\sss (A_i) - k_i)!} 
\mu(d\sss ) 
&= \int_{A_1^{k_1} \times \cdots \times A_j^{k_j}} 
\rhol ^m (\mathbf{x}) \mmm ^m(d\mathbf{x})
.\end{align}
Here $A_1, \dots, A_j \in \Fl $  are disjoint and 
$k_1,\dots, k_j \in \N $ such that $k_1+\cdots + k_j = m $. 
If $\sss (A_i) - k_i \le 0$, we set 
${\sss (A_i)!}/{(\sss (A_i) -k_i)!}=0$.

Let $ \nul $ be the $ \alpha$-determinantal point process associated to $ (\KFl , \lambdal )$ 
as before. 
Let $ \rhol ^m $ and $ \rhoFl ^m $ be 
the $ m $-point correlation functions of $ \mul $ and 
$ \nuFl $ with respect to $ \mmm $ and $ \lambdal $, respectively. 
We now state one of our main theorems:

\begin{thm} \label{l:21} 
Let $ \III_{\ell }(\AAAA ) = \{ \ii \in \IIIl \, ;\,  \Bli \subset \AAAA \} $. 
For $ \mathbb{A}=  \AAAA _1\ts \cdots \ts \AAAA _m$, we set 
\begin{align}\label{:21a}&
\III_{\ell }(\mathbb{A}) = 
\III_{\ell }(\AAAA _1)\ts \cdots \ts \III_{\ell }(\AAAA _m ) 
.\end{align}
Assume that 
$\AAAA _n \in \Dl  $ for all $ n = 1,\ldots,m $. 
Then
\begin{align}\label{:21b}&
\int_{\mathbb{A}} \rhol ^m(\mathbf{x}) \mmm ^{m}(d\mathbf{x}) = 
\sum_{\iii \in \III_{\ell }(\mathbb{A}) } 
\rhoFl ^m (\iii ) 
.\end{align}
\end{thm}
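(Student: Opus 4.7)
The plan is to transfer the identity from $\rhol ^m$ to the full correlation $\rho ^m$ of $\muk $, expand the $\alpha$-determinant over $\sSS _m $, carry out the integrations cycle by cycle via Fubini, and use the orthonormal basis $\FFFl $ to convert each cycle trace into a sum over $\IIIl $.

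Since each $\AAAA _n \in \Dl \subset \Fl $, the product $\mathbb{A}$ is $\Flm $-measurable; the defining relation \eqref{:21z} of $\rhol ^m$ and the analogous relation for $\rho ^m$ share a common left-hand side when the test sets lie in $\Fl $, because the integrand is then $\Gl $-measurable and $\muk $ agrees with $\mul $ on $\Gl $. Consequently
\begin{align*}
\int _{\mathbb{A}} \rhol ^m (\mathbf{x}) \, \mmm ^m(d\mathbf{x}) = \int _{\mathbb{A}} \rho ^m (\mathbf{x}) \, \mmm ^m(d\mathbf{x}).
\end{align*}
Substituting $\rho ^m (\mathbf{x}) = \deta [\K (x_a,x_b)]_{a,b=1}^m$ and exchanging the finite sum over $\sigma \in \sSS _m $ with the integral then gives
\begin{align*}
\int _{\mathbb{A}} \rho ^m \, \mmm ^m = \sum _{\sigma \in \sSS _m } \alpha ^{m-\nu(\sigma)} \int _{\mathbb{A}} \prod _{a=1}^m \K (x_a,x_{\sigma(a)}) \, \mmm ^m(d\mathbf{x}).
\end{align*}

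For each fixed $\sigma $ with cycles $C_1,\ldots,C_{\nu(\sigma)}$, Fubini factors the integral across cycles, and the contribution from a cycle $(a_1,\ldots,a_k)$ with $\sigma (a_j) = a_{j+1 \bmod k}$ equals the operator trace $\mathrm{tr}(P_{a_1} \K P_{a_2} \K \cdots P_{a_k} \K )$, where $P_a $ denotes multiplication by $1_{\AAAA _a}$; the local trace class hypothesis in \thetag{A1} makes this well-defined. To rewrite this trace, the crux step is to show that for every $\AAAA \in \Dl $ the subfamily $\{\flii \}_{\ii \in \III _{\ell }(\AAAA )}$ is an orthonormal basis of $L^2(\AAAA ,\mmm )$: orthonormality is inherited from $\FFFl $, each such $\flii $ is supported in $\AAAA $ by \eqref{:21t} and the definition of $\III _{\ell }(\AAAA )$, and completeness follows from \eqref{:21s} together with $\sigma [\cup _{\eN }\Fl ] = \mathfrak{B}(\Sit )$ after unwinding the shifted-tree indexing \eqref{:21g}--\eqref{:21u}. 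This yields the resolution $P_{\AAAA } = \sum _{\ii \in \III _{\ell }(\AAAA )} \flii \otimes \flii $, and inserting it between each $\K $ in the cycle trace while invoking \eqref{:21v} gives
\begin{align*}
\mathrm{tr}\bigl(P_{a_1}\K \cdots P_{a_k}\K \bigr) = \sum _{\ii _1 \in \III _{\ell }(\AAAA _{a_1}),\, \ldots ,\, \ii _k \in \III _{\ell }(\AAAA _{a_k}) } \KFl (\ii _1,\ii _2) \cdots \KFl (\ii _k,\ii _1) .
\end{align*}

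Reassembling the cycle factors for a fixed $\sigma $ reproduces $\prod _{a=1}^m \KFl (\ii _a,\ii _{\sigma(a)})$ indexed by $\iii = (\ii _1,\ldots,\ii _m) \in \III _{\ell }(\mathbb{A})$; summing over $\sigma $ weighted by $\alpha ^{m-\nu(\sigma)}$ recovers $\deta [\KFl (\ii _a,\ii _b)]_{a,b=1}^m = \rhoFl ^m (\iii )$, and a final summation over $\iii $ yields the right-hand side of \eqref{:21b}. The main obstacle is the orthonormal-basis claim: one must verify, through the rank-and-cell bookkeeping of \eqref{:21i}--\eqref{:21u}, that the elements of $\FFFl $ with supports contained in $\AAAA $ are exactly those indexed by $\III _{\ell }(\AAAA )$ and that they span $L^2(\AAAA ,\mmm )$. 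A secondary point is justifying the interchange of the (possibly infinite) index sums with the trace, which is controlled by the local trace class assumption in \thetag{A1}.
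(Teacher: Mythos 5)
Your argument is correct and arrives at the same identity, but by a genuinely different route from the paper's. The paper also begins with \eqref{:34a} and the permutation expansion, but then replaces $ \K $ by the finite-rank truncation $ \KR $ of \eqref{:31y}, proves an $ L^2 $ convergence statement for iterated kernels by an induction on $ m $ (\lref{l:31}~\thetag{3}), evaluates the resulting \emph{finite} sums using only the orthogonality relation of \lref{l:33}, and finally lets $ R \to \infty $. You instead factor each permutation into cycles, read each cycle integral as an operator trace, and insert the resolution of the identity $ 1_{\AAAA } = \sum_{\ii \in \III _{\ell }(\AAAA )} \flii \otimes \flii $; this rests on the completeness of $ \{ \flii \}_{\ii \in \III _{\ell }(\AAAA )} $ in $ L^2(\AAAA ,\mmm ) $, a correct strengthening of \lref{l:33}: for $ \AAAA \in \Dl $ every $ \ii \in \IIIl $ satisfies either $ \Bli \subset \AAAA $ or $ \Bli \cap \AAAA = \emptyset $, so the global basis $ \FFFl $ splits along $ L^2(\Sit ,\mmm ) = L^2(\AAAA ,\mmm )\oplus L^2(\Sit \setminus \AAAA ,\mmm ) $. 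What your route buys is the elimination of the truncation and of the inductive $ L^2 $ estimates that occupy most of Section 3; what it costs is that the two convergence issues you only flag must actually be carried out, namely inserting infinitely many resolutions of the identity into a trace and rearranging the resulting multiple series over $ \iii $ and $ \sigma $. Both do follow from \thetag{A1} --- each localized operator $ 1_{\AAAA _p}\K 1_{\AAAA _q} $ is Hilbert--Schmidt, so cycles of length at least two give absolutely convergent index sums by the Schwarz inequality, while fixed points give $ \sum_{\ii \in \III _{\ell }(\AAAA )}\KFl (\ii ,\ii ) = \mathrm{tr}(1_{\AAAA }\K 1_{\AAAA }) < \infty $ with nonnegative terms by positivity --- but these estimates should be written out, since for $ \alpha < 0 $ the summands are not of one sign and the rearrangement is not automatic. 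A final caveat shared with the paper's \eqref{:34a}: identifying the length-one cycle contribution $ \int_{\AAAA }\K (x,x)\mmm (dx) $ with a trace presupposes a good choice of the diagonal version of $ \K $.
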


Let $ \mathsf{I}(\ell ) $ be the configuration space over $ \IIIl $. 
Let 
\begin{align}& \label{:22t}
 \1 (\ell )  := 
\bigcup_{i \in \IIIl } \{ i \} \ts \Bli  
.\end{align}
Let $ \4   $ be the configuration space over $ \1 (\ell )$. 
Then by definition each element $ \omega \in \4   $ is of the form 
$ \omega = \sum_{n} \delta_{(\ii _{n} , s_{n})}$ 
such that $ s_{n} \in \Blin $. Hence 
\begin{align} \label{:22u}
\4 &  = \{ \omega = \sum_{n} \delta_{(\ii _{n} , s_{n})} 
\, ;\, \mathsf{i} =\sum_{n} \delta_{\ii _n} \in \IIl , \, 
 s_{n} \in \Blin     \} 
.\end{align}
We exclude the zero measure from $ \4 $. 

Let $ \mmFi $ be as in \eqref{:21w}. We set  
\begin{align}
\label{:22v}
\mmFl = & 
 \prod_{{ \ii \in \IIIl  }}  \mmFi  
,\quad 
\mmFii = 
 \prod_{n}  \mmFin  
.\end{align}

\begin{rem}\label{r:22}
\thetag{1}
A configuration $ \mathsf{i} \in \IIl $ can be represented as $ \mathsf{i} = \sum _{n} \delta_{\ii _{n}} $ and this may have multiple points. 

\thetag{2}
Let $ \mathsf{i} \in \IIl $. 
Suppose that for some $ m \in \N \cup \{\infty\}$, 
$ \mathsf{i}$ has plural representations such as 
\begin{align}\notag
\mathsf{i} = \sum_{n=1}^m \delta_{i_n} = \sum_{n=1}^m \delta_{j_n}
.\end{align}
Then $ \prod_{n=1}^m \Blin $ and $ \prod_{n=1}^m \Bljn $ can be different subsets of $ \Sit ^m$. 
However, the product probability spaces 
$ (\prod_{n=1}^{m} \Blin , \mmFii )$ and $ (\prod_{n=1}^{m} \Bljn , \mmFii )$ 
are the same under the identification such that 
\begin{align}\notag
\prod _{n=1}^m \Blin \ni (x_n)_{n=1}^m 
\mapsto 
(x_{\sigma(n)})_{n=1}^m \in \prod _{n=1}^m \Bljn 
.\end{align}
Here, $ \sigma $ is the permutation such that $ i_{\sigma(n)}=j_n$. 
They do not depend on the representations of $ \mathsf{i}$ under this identification. 
\end{rem}

We set $ \map{ \iotal  }{ \4 }{ \mathsf{I}(\ell ) }$ 
such that $ \sum_{n}  \delta_{(\ii _n , s_n )} \mapsto \sum_{n} \delta_{i_n} $. 
For $ \mathsf{i} \in \IIl $, let 
\begin{align}\notag 
\map{ \kappal }{  \{ \omega \in \4 ; \iota _{\ell} ( \omega ) = \mathsf{i} \} }{   \prod_{n} \Blin }
\end{align}
such that $ \sum_{n}  \delta_{(\ii _n , s_n )} \mapsto (s_n)
$. 
Let $ \7 $ be the probability measure  on $ \4 $ given by 
the disintegration made of 
\begin{align} \label{:22w} & \quad \quad 
(\7 ) \circ \iotal ^{-1} (d\mathsf{i}) = \nul (d\mathsf{i})
,\\\label{:22x}&\quad \quad 
\7 ( \kappal (\omega ) \in d\mathbf{s}  | \iotal (\omega ) = \mathsf{i}) = 
   \mmFii (d\mathbf{s})  
,\quad \mathbf{s} =(s_n)_{}
\text{ for } \mathsf{i} = \sum_{n} \delta_{i_n}
.\end{align}

\begin{rem}\label{r:23} \thetag{1} 
We can naturally regard the probability measures in \eqref{:22x} 
as a point process on $ \prod_{n} \Blin $ 
supported on the set of configurations with exactly one particle configuration 
$ \mathsf{s}= \delta_{\mathbf{s}}$ on 
$ \prod_{n} \Blin $, that is, 
$ \mathbf{s}=(s_n)_{}$ is such that  
$ s_n \in \Blin $. 
\\\thetag{2} 
We can regard $ \7 $ as a marked point process as follows: The configuration $ \mathsf{i}$ is distributed according to $ \nul $, while the marks are independent and for each $ \mathsf{i}$ the mark $ \mathbf{s}$ 
is distributed according to $ \mmFii $. Thus the space of marks depends on $ \mathsf{i}$. 

\end{rem}

\begin{thm}	\label{l:22} 
Let $ \map{\ulabl  }{\4 }{\Ssf  }$ be such that 
$ \ulabl (\omega ) = \sum_{n} \delta_{s_{n}}$, 
where $ \omega = \sum_{n} \delta_{(\ii _n, s_{n} )} $. 
Then 
\begin{align}\label{:22a}&
\mul  = (\7 )\circ  \ulabl  ^{-1} |_{ \Gl } 
.\end{align}
\end{thm}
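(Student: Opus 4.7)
The plan is to show that the two probability measures on $(\Ssf,\Gl)$ coincide by matching all their joint factorial moments of counts on $\Dl$-cells. Since $\Gl$ is generated by the counts $\sss\mapsto\sss(\Ali)$ for $\ii\in\Il$, both $\mul$ and $(\7)\circ\ulabl^{-1}|_{\Gl}$ are determined by the joint laws of finite tuples $(\sss(\AAAA_n))$ with $\AAAA_n\in\Dl$; under \thetag{A1} these $\N$-valued distributions are in turn determined by their joint factorial moments. So it suffices, for any pairwise distinct $\AAAA_1,\dots,\AAAA_j\in\Dl$ and $k_1,\dots,k_j\in\N$ with $m=\sum_n k_n$, to show that both measures assign the same value to
\[
E\Big[\prod_{n=1}^j\frac{\sss(\AAAA_n)!}{(\sss(\AAAA_n)-k_n)!}\Big].
\]
Under $\mul$, this equals $\int_{\prod_n\AAAA_n^{k_n}}\rhol^m(\mathbf{x})\,\mmm^m(d\mathbf{x})$ by \eqref{:21z}.

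Next I would compute the same moment under $(\7)\circ\ulabl^{-1}$, starting from $\omega=\sum_n\delta_{(\ii_n,s_n)}\in\4$. The key geometric input, extracted by unwinding \eqref{:21j}--\eqref{:21k} together with \eqref{:21u}, is that the support $\Bli$ of $\flii$ is contained in a single cell of $\Dl$ for every $\ii\in\IIIl$. Hence, for $\AAAA\in\Dl$, one has $s_n\in\AAAA$ iff $\Blin\subset\AAAA$ iff $\ii_n\in\III_\ell(\AAAA)$, yielding $(\ulabl\omega)(\AAAA)=\iotal(\omega)(\III_\ell(\AAAA))$. Since the integrand therefore depends only on $\iotal(\omega)$, the disintegration \eqref{:22w} reduces the moment under $\7$ to the factorial moment of $\nul$ on the sets $\III_\ell(\AAAA_n)$, so
\[
E_{\nul}\Big[\prod_{n=1}^j\frac{\mathsf{i}(\III_\ell(\AAAA_n))!}{(\mathsf{i}(\III_\ell(\AAAA_n))-k_n)!}\Big]=\sum_{\iii\in\prod_n\III_\ell(\AAAA_n)^{k_n}}\rhoFl^m(\iii),
\]
where the second equality is the defining relation of $\rhoFl^m$ on $(\IIIl,\lambdal)$ and the $\III_\ell(\AAAA_n)$ are pairwise disjoint (distinct $\Dl$-cells are disjoint, and the containment above propagates the disjointness). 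The right-hand sum is precisely the right-hand side of \eqref{:21b} in \tref{l:21} applied with $\mathbb{A}=\AAAA_1^{k_1}\times\cdots\times\AAAA_j^{k_j}$, hence equals $\int_{\prod_n\AAAA_n^{k_n}}\rhol^m(\mathbf{x})\,\mmm^m(d\mathbf{x})$, matching the moment under $\mul$.

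The main obstacle is the geometric containment of $\Bli$ in a single $\Dl$-cell: it is immediate when $\mathrm{rank}(\ii)=1$ from \eqref{:21j}, but for $\mathrm{rank}(\ii)=\rfrak\ge 2$ one must trace through the re-indexing $\theta_{\ell-1,\rfrak}$ of \eqref{:21h}--\eqref{:21i}, which identifies $\Bli$ with a cell of the finer partition $\Delta(\ell+\rfrak-2)$ nested inside the unique $\Dl$-cell whose index matches the first $\ell$ coordinates of $\ii$. Once this containment and the resulting counting identity $(\ulabl\omega)(\AAAA)=\iotal(\omega)(\III_\ell(\AAAA))$ are in hand, the rest of the argument is routine bookkeeping together with a single application of \tref{l:21}.
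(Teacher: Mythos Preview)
Your proposal is correct and follows essentially the same route as the paper: both arguments reduce \eqref{:22a} to matching the factorial moments (equivalently, the $m$-point correlation functions restricted to $\Fl^m$) of $\mul$ and $(\7)\circ\ulabl^{-1}|_{\Gl}$ on products of $\Dl$-cells, and both invoke \tref{l:21} for the crucial identity. Your treatment is in fact more explicit than the paper's at the one nontrivial point---the counting identity $(\ulabl\omega)(\AAAA)=\iotal(\omega)(\III_\ell(\AAAA))$ coming from the containment $\Bli\subset\AAAA$ for some $\AAAA\in\Dl$---which the paper leaves implicit in the passage from the first to the second line of \eqref{:41d}.
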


\begin{rem}\label{r:4} 
\tref{l:22} implies that $ \7 $ is a {\em lift} of $\mul $ onto $ \4 $. 
We can naturally regard  $ \wIIIl  $ as binary trees. 
Hence we call $ \7 $ a tree representation of $ \mu $ of level $ \ell $. 
\end{rem}

We present a decomposition of $ \mul $, 
which follows from \tref{l:22} immediately. 
Let $ \mmFii ^{\ulab } = \mmFii \circ \ulabi ^{-1}$ 
, where 
$ \map{\ulabi }{ \prod_{n} \Blin  }{\Ssf  }$ 
is the unlabel map 
such that 
\begin{align}\label{:22b}&
\ulabi ((s_{n})) = 
\sum_{n} \delta_{s_{n}}
. \end{align}
\begin{thm}	\label{l:23} 
For each $ \mathsf{A}\in \Gl $, 
\begin{align}\label{:23b}& \quad \quad 
\mu (\mathsf{A}) = 
\int_{\IIl }\nuFl (d\mathsf{i}) \,  \mmFii ^{\ulab }    (\mathsf{A}) 
.\end{align}
\end{thm}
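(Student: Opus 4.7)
The plan is to derive \tref{l:23} directly from \tref{l:22} by unwinding the disintegration \eqref{:22w}--\eqref{:22x} that defines $\nul \diamond \mmFl$. Since $\mathsf{A} \in \Gl$ one has $\mu(\mathsf{A}) = \mul(\mathsf{A})$, so \eqref{:22a} yields
\begin{align*}
\mu(\mathsf{A})
= (\nul \diamond \mmFl)(\ulabl^{-1}(\mathsf{A}))
= \int_{\4} 1_{\mathsf{A}}(\ulabl(\omega)) \, (\nul \diamond \mmFl)(d\omega).
\end{align*}

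Next I would apply the disintegration to split this as an outer integral over $\IIl$ against $\nul$ and a conditional integral over each fiber $\{\omega \in \4 : \iotal(\omega) = \mathsf{i}\}$. On such a fiber $\kappal$ is a bijection onto $\prod_{n} \Blin$ whose pushforward under the conditional law is $\mmFii$, by \eqref{:22x}. The identity I want to exploit is that for $\omega = \sum_{n} \delta_{(\ii_n, s_n)}$ with $\iotal(\omega) = \mathsf{i}$,
\begin{align*}
\ulabl(\omega) = \sum_{n} \delta_{s_n} = \ulabi(\kappal(\omega)),
\end{align*}
that is, $\ulabl = \ulabi \circ \kappal$ on every fiber. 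Changing variables via $\kappal$ inside the conditional integral then gives
\begin{align*}
\int_{\prod_n \Blin} 1_{\mathsf{A}}(\ulabi(\mathbf{s})) \, \mmFii(d\mathbf{s})
= (\mmFii \circ \ulabi^{-1})(\mathsf{A})
= \mmFii^{\ulab}(\mathsf{A})
\end{align*}
by the definition of $\mmFii^{\ulab}$ just before the statement, and integrating against $\nul(d\mathsf{i})$ over $\IIl$ produces \eqref{:23b}.

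The argument is essentially bookkeeping rather than genuine analysis; no real obstacle is anticipated. The only point that needs a moment of care is the a priori dependence of $\prod_n \Blin$, of the probability measure $\mmFii$, and of the unlabel map $\ulabi$ on the choice of ordering of the atoms of $\mathsf{i} = \sum_n \delta_{\ii_n}$. This is precisely the ambiguity addressed in the remark preceding \tref{l:22}: under the natural permutation identification discussed there, $(\prod_n \Blin, \mmFii)$ is intrinsic to $\mathsf{i}$, and since $\ulabi$ is symmetric in its arguments, the pushforward $\mmFii^{\ulab}$ depends only on $\mathsf{i}$ as a configuration. Hence the integrand in \eqref{:23b} is a well-defined function of $\mathsf{i} \in \IIl$, and the identity follows.
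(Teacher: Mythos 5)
Your proposal is correct and follows essentially the same route as the paper's own proof: apply \tref{l:22}, disintegrate $\7$ over $\iotal$, and use the identity $\ulabl = \ulabi \circ \kappal$ on each fiber to identify the conditional integral with $\mmFii^{\ulab}(\mathsf{A})$. Your added remark on the well-definedness of the integrand under reordering of the atoms of $\mathsf{i}$ is exactly the point covered by the remark preceding \tref{l:22}.
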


We remark that $ \mul $ is {\em not} an $ \alpha$-determinantal point process. 
Hence we exploit $ \7 $ instead of $ \mul $. 
As we have seen in \tref{l:22},  
$  \7 $ is a lift of $ \mul $ in the sense of \eqref{:22a}, 
from which we can deduce nice properties of $ \mul $. 
Indeed, an application of \tref{l:22} 
is  tail triviality of $ \mu $ in the case $ \alpha =-1$ \cite{o-o.tail}. 

\section{Proof of \tref{l:21}}\label{s:3}
The purpose of this section is to prove \tref{l:21}. 
In \lref{l:31}, we present 
a kind of Parseval's identity of kernels $ \K $ and $ \KFl $ 
using the orthonormal basis 
$ \FFFl  $, 
where $ \KFl $ is the kernel given by \eqref{:21v} and 
$ \FFFl  $ is as in \eqref{:21j} and \eqref{:21k}. 
In \lref{l:32}, we prove $ (\KFl , \lambdal )$ is a determinantal kernel and 
the associated $\alpha$-determinantal point process $ \nul $ exists. 
We will lift the Parseval's identity between $ \K $ and $ \KFl $ 
to that of correlation functions of $ \mul $ and $ \nul $ in \tref{l:21}.  

%

By definition $ \6 $ satisfies 
\begin{align}\label{:30a}&
\int_{\Sit } |\flii (x)|^2 \mmm (dx)= 1 \quad 
\text{ for all } \ii \in\IIIl 
,\\\label{:30b}&
\int_{\Sit } \flii (x) {\fljj (x)}  \mmm (dx)= 0 \quad 
\text{ for all } \ii \not=\jj \in \IIIl 
.\end{align}
\begin{lmm} \label{l:31} 
\thetag{1} Let $ P(x) = \sum_i \p  (\ii ) f _{\ell ,i} (x) $ and 
$ Q(y) = \sum_j \q  (\jj ) f _{\ell ,j} (y) $. 
Suppose that the supports of $ \p  $ and $ \q  $ are finite sets. 
Then 
\begin{align}\label{:31z}&
\int_{\Sit \ts \Sit }   \K (x,y) \overline{P (x)}Q (y) \mmm (dx) \mmm (dy)  = 
\sum_{i,j} \KFl (i,j) \overline{\p  (\ii )}\q  (\jj ) 
.\end{align}
\thetag{2} We have an expansion of 
$ \K $ in $ L^2_{\mathrm{loc}}(\Sit \ts \Sit ,\mmm \ts \mmm )$ such that 
\begin{align}\label{:31a}&
\K (x,y) = \sum_{\ii ,\jj \in\IIIl }
\KFl (\ii ,\jj ) 
\flii  (x) 
{\fljj  (y)} 
.\end{align}
\thetag{3} 
Let 
$ \III (\ell ; R ) = \{ i \in \IIIl ; \mathrm{rank}(i) \le R \}$, 
where $ \mathrm{rank}(i)$ is defined before \eqref{:21j}. 
Let 
\begin{align}\label{:31y}&
\KR (x,y)= 
\sum_{\ii , \jj \in \III (\ell ; R )  } 
 \KFl (\ii ,\jj ) 
\flii (x) 
{\fljj (y)}
.\end{align}
We set $\mathbb{A}=  \AAAA _1\ts \cdots \ts \AAAA _m $. 
Assume that $\AAAA _n \in \Dl $ for $  n = 1,\ldots,m $. 
Then for $ \sigma \in \mathfrak{S}_m$, 
\begin{align}\label{:31x}&
\limi{R}
\int_{\mathbb{A}  } 
\prod_{n=1}^m 
\KR (x_n,x_{\sigma(n)})
\mmm ^{m}(d\mathbf{x}) 
=
\int_{\mathbb{A}  } 
\prod_{n=1}^m 
\K (x_n,x_{\sigma(n)})
\mmm ^{m}(d\mathbf{x}) 
.\end{align}
\end{lmm}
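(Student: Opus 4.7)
For part (1), substitute the finite expansions of $P$ and $Q$ into the left-hand side and exchange the (finite) sums with the double integral by bilinearity; each resulting integral is precisely $\KFl (\ii ,\jj )$ by the defining formula \eqref{:21v}. No convergence issue arises, so this is immediate.

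For part (2), the plan is to combine the local Hilbert--Schmidt property of $\K $ with a Parseval expansion in a tensor-product basis. By \thetag{A1}, for any compact $\AAAA \subset \Sit $ the operator $\KKK _{\AAAA }$ from \eqref{:111a} is trace class, hence Hilbert--Schmidt, so the localised kernel $1_{\AAAA }(x)\K (x,y)1_{\AAAA }(y)$ lies in $L^2(\Sit \ts \Sit , \mmm \ot \mmm )$. Since $\FFFl =\{ \flii \}_{\ii \in \IIIl }$ is an orthonormal basis of $L^2(\Sit , \mmm )$, the tensor products $\{ \flii \ot \fljj \}_{(\ii ,\jj )\in \IIIl ^2 }$ form an ONB of $L^2(\Sit \ts \Sit , \mmm \ot \mmm )$. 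Part (1), applied with $P=\flii $ and $Q=\fljj $, identifies the Fourier coefficient of the localised kernel against $\flii \ot \fljj $ as exactly $\KFl (\ii ,\jj )$ (whenever $\Bli , \Blj \subset \AAAA $; other Fourier coefficients vanish by the support property \eqref{:21t}). Letting $\AAAA $ exhaust $\Sit $ through a countable cover by cells from $\{ \Dl \}$ yields \eqref{:31a} in $L^2_{\mathrm{loc}}$.

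For part (3), put $\AAAA := \AAAA _1\cup\cdots\cup \AAAA _m$, which is relatively compact. Decompose $\sigma $ into disjoint cycles; the integral then factors as a product of contributions, one per cycle. A cycle $(n_1,\ldots,n_k)$ with $k\ge 2$ contributes the operator trace
\begin{equation*}
\mathrm{Tr}\bigl( 1_{\AAAA _{n_1}} \KR \, 1_{\AAAA _{n_2}} \KR \cdots 1_{\AAAA _{n_k}} \KR \bigr),
\end{equation*}
where each $1_{\AAAA _{n}}$ is multiplication on $L^2(\Sit , \mmm )$, and the analogous formula with $\K $ in place of $\KR $ gives the right side of \eqref{:31x}. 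Writing $\KR = P_R \K P_R$ with $P_R$ the orthogonal projection onto the span of $\{ \flii : \mathrm{rank}(\ii )\le R\}$, Parseval gives $P_R\to I$ strongly on $L^2(\Sit , \mmm )$; combined with the trace-class property of $1_{\AAAA } \K 1_{\AAAA }$, this upgrades to trace-norm convergence $1_{\AAAA } \KR 1_{\AAAA }\to 1_{\AAAA }\K 1_{\AAAA }$. Since each $1_{\AAAA _n}$ is a bounded operator satisfying $1_{\AAAA _n}=1_{\AAAA _n}1_{\AAAA }$, continuity of the trace under trace-norm convergence and of composition with bounded operators gives the cycle-by-cycle convergence, and hence \eqref{:31x}. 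The $1$-cycle case is handled identically, writing $\mathrm{Tr}( 1_{\AAAA _n} \KR )=\mathrm{Tr}\bigl(1_{\AAAA _n}(1_{\AAAA }\KR 1_{\AAAA })\bigr)$.

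\textbf{Main obstacle.} The analytic step requiring care is the promotion of strong convergence $P_R\to I$ to trace-norm convergence $P_R T P_R\to T$ for the trace-class operator $T=1_{\AAAA }\K 1_{\AAAA }$. This is a standard consequence of Grümm's theorem, or may be established directly by approximating $T$ by finite-rank operators on which $P_R$ eventually acts as the identity and then exploiting completeness of the trace ideal in the trace norm. Once this is secured, propagating the convergence through a product of trace-class operators sandwiched with bounded multiplication operators is routine.
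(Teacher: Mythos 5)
Parts \thetag{1} and \thetag{2} are correct and essentially coincide with the paper's own computation (substitute the finite expansions, invoke \eqref{:21v}, and read off Fourier coefficients against the orthonormal basis $ \FFFl $). For part \thetag{3} you take a genuinely different route: a cycle decomposition, trace identities, and a Gr\"umm-type upgrade from strong to trace-norm convergence, in place of the paper's hands-on argument, which introduces the iterated kernels $ \KAn $, $ \KRAn $, $ \LRAkn $, telescopes the replacement of $ \K $ by $ \KR $ one factor at a time, and closes an induction on $ m $ with the Schwarz inequality in $ L^2 $. Your packaging is more conceptual and shorter, but as written it has one genuine gap.

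The gap is in the sentence reducing everything to ``$ P_R T P_R \to T $ in trace norm for the trace-class operator $ T = 1_{\AAAA }\K 1_{\AAAA } $.'' The operator you actually need to control is $ 1_{\AAAA }\KR 1_{\AAAA } = 1_{\AAAA } P_R \K P_R 1_{\AAAA } $, and this equals $ P_R \bigl( 1_{\AAAA }\K 1_{\AAAA } \bigr) P_R $ only if $ P_R $ commutes with $ 1_{\AAAA } $ (and, for the individual cycle factors, with each $ 1_{\AAAA _n} $). For a general orthonormal basis this identification fails, and so can the conclusion: writing $ \K = \sum_k \lambda_k \langle \cdot , g_k\rangle g_k $, one has $ \mathrm{Tr}\bigl( 1_{\AAAA } P_R \K P_R 1_{\AAAA } \bigr) = \sum_k \lambda_k \| 1_{\AAAA } P_R g_k \|^2 $, which need not converge to $ \sum_k \lambda_k \| 1_{\AAAA } g_k \|^2 $ if $ P_R $ moves mass of the $ g_k $ into $ \AAAA $ from outside; local trace class of $ \K $ does not rule this out. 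What rescues the argument is special to the tree basis: by \eqref{:21t} and \eqref{:21u} each support $ \Bli $ is contained in a single cell of $ \Dl $, so $ 1_{\AAAA _n} \flii \in \{ 0 , \flii \} $ for every $ \AAAA _n \in \Dl $, whence $ P_R $ commutes with every $ 1_{\AAAA _n} $ and with $ 1_{\AAAA } $. This is precisely the content the paper isolates as \lref{l:33}. Once you state this commutation, the identity $ 1_{\AAAA }\KR 1_{\AAAA } = P_R ( 1_{\AAAA } \K 1_{\AAAA } ) P_R $ is legitimate, and the remainder of your argument (finite-rank approximation in the trace ideal, continuity of the trace, the uniform bound $ \| \KR \| \le \| \K \| $ for the products over cycles) goes through.
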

\begin{proof}
From \eqref{:21v} we deduce that  
\begin{align}\label{:31b}&
\int_{\Sit \ts \Sit }   \K (x,y) 
\overline{P (x)}Q (y) \mmm (dx) \mmm (dy) 
\\ \notag =& 
\int_{\Sit \ts \Sit }  \K (x,y) 
\overline{\sum_i \p  (\ii ) f _{\ell ,i} (x) } 
\sum_j \q  (\jj ) f _{\ell ,j} (y) \mmm (dx) \mmm (dy) 
\\ \notag =&
\sum_{i,j} \int_{\Sit \ts \Sit} 
 \K (x,y)  {\flii (x)}  \fljj (y) \mmm (dx) \mmm (dy) 
\overline{\p  (\ii )}\q  (\jj ) 
\\ \notag =&
\sum_{i,j} \KFl (i,j) \overline{\p  (\ii )}\q  (\jj ) 
.\end{align}
This yields \eqref{:31z}. We have thus proved \thetag{1}. 
By a direct calculation, we have 
\begin{align}\label{:31c}&
 \int_{\Sit }  \overline{P(x)} \flii (x)  \mmm (dx) = 
\int_{\Sit }  
\sum_{\jj } \overline{ \p (\jj ) }f _{\ell ,\jj } (x) \flii (x)  \mmm (dx) = 
\overline{\p (\ii ) } 
,\\ \notag &
 \int_{\Sit }  Q(y) {\fljj (y)}  \mmm (dy) = 
\int_{\Sit }  \sum_{\ii } \q  (\ii ) f _{\ell ,\ii } (y) {\fljj (y)}  \mmm (dy) = \q (\jj ) 
.\end{align}
Combining \eqref{:31b} and \eqref{:31c} yields 
\begin{align}\notag 
&
\int_{\Sit \ts \Sit }   \K (x,y) \overline{P (x)}Q (y) \mmm (dx) \mmm (dy) = 
\\ \notag &
\int_{\Sit \ts \Sit }  \sum_{i,j} \KFl (i,j) 
\flii (x) {\fljj (y)} \overline{P (x)}Q (y) \mmm (dx) \mmm (dy) 
.\end{align}
This implies \eqref{:31a}.

Without loss of generality, we can assume $ \sigma$ is a cyclic permutation. 
We prove \eqref{:31x}  only for  $ \sigma = (1,2,\ldots,m) $. 
Let $ \AAAA _{n} \in \Dl$ for $ n \geq 0 $. 
Let $ \A = \A (m)= \AAAA _{0} \times\cdots\times\AAAA _{m} $.  
For $ 0 \leq n \leq m $, we set 
\begin{align}\label{:31d}&
\KAn (x,y) =
\int _{\AAAA _{1} \times \cdots \times \AAAA _{n-1}} 
\prod _{p=1}^{n}\K (x_{p-1},x_{p})\mmm(dx_1)\cdots\mmm(dx_{n-1})
,\\\label{:31f}&
\KRAn (x,y) =
\int _{\AAAA _{m-(n-1)} \times \cdots \times \AAAA _{m-1}} 
\prod _{p=1}^{n}\KR (x_{p-1},x_{p})\mmm(dx_{1})\cdots\mmm(dx_{n})
.\end{align}
where $ x_0 =x $, $ x_n =y $, $ \K ^{\A,0}(x,y) = \KR ^{\A ,0}(x,y) = \delta_{x}(y) $, $ \K ^{\A,1}(x,y)=\K (x,y) $, and $ \KR ^{\A,1}(x,y)=\KR (x,y)$. 
By assumption $ \K $ is a trace class operator on $ L^2 (\B , \mmm)$ for a relatively compact set $ \B $ such that $ \bigcup_{p=1}^m \AAAA _p \subset \B $. 
Then $ \KAn $ is also a trace class operator on $ L^2 (\B , \mmm)$ for each $ n \in \{1,\ldots,m\} $. 
In particular, $ \KAn $ is a Hilbert-Schmidt operator on $ L^2 (\B , \mmm)$ and satisfies 
\begin{align}\label{:31e}&
\int _{\B ^2} |\KAn (x,y)|^2 \mmm(dx)\mmm(dy) < \infty
.\end{align}
We set for $ k,n \geq 0 $ such that $ k+n =m$, 
\begin{align}\label{:31g}&
\LRAkn  (x,y) =
\int _{\AAAA _{k}} 
\KAk (x,z) \KRAn (z,y)\mmm(dz)
.\end{align}
We shall prove the following by induction for $ m $ : 
for  all $ k , n \geq 0 $ such that $ k+n = m $ 
and for any $ \mathbb{A}=\AAAA _0 \times \cdots \times \AAAA _m$ 
such that $ \AAAA _p \in \Dl $ for $ p= 0 ,\ldots, m$
\begin{align}\label{:31h}&
\lim_{R \to \infty}
\int_{\AAAA _0 \times \AAAA _m}|\LR ^{\A ,k ,n} (x,y) - \K ^{\A,m} (x,y)|^2 \mmm (dx)\mmm(dy) = 0
,\\ \label{:31i}&
\sup_{R} \int _{\AAAA _0 \times \AAAA _m}|\LR ^{\A ,k ,n} (x,y) |^2 \mmm(dx)\mmm(dy) < \infty
.\end{align}

Let $ m=1$. 
For $ (k,n) = (0,1)$, \lref{l:31} \thetag{2} implies \eqref{:31h} and \eqref{:31i}. 
For $ (k,n) = (1,0)$,  $ \LR ^{\A ,1 ,0}(x,y) =\K ^{\A , 1}(x,y) $ by the definition in \eqref{:31g}. 
Then \eqref{:31h} and \eqref{:31i} hold for $ (k,n)=(1,0) $. 
Hence \eqref{:31h} and \eqref{:31i} holds for $ m=1$. 

Suppose \eqref{:31h} and \eqref{:31i} hold for $1,\ldots, m-1 $. 
Let $ k + n =m-1$ and $ \mathbb{A}=\AAAA_0 \times \cdots \times \AAAA _m$. 
By a straightforward calculation, 
\begin{align}\notag& 
\LR  ^{\A ,k, n+1 }(x,y)-\LR ^{\A ,k+1,n} (x,y) 
\\ \notag =&
\int _{\AAAA _k} \K ^{\A , k }(x,z) \KR ^{\A , n+1}(z,y)\mmm(dz)
-
\int _{\AAAA _{k+1}} \K ^{\A , k+1 }(x,w) \KR ^{\A , n }(w,y) \mmm(dw)
\\ \notag =&
\int _{\AAAA _k \times \AAAA _{k+1}}  
\K ^{\A , k }(x,z) 
\KR (z,w) 
\KR ^{\A , n}(w,y)
-
\K ^{\A , k }(x,z) \K (z,w) \KR ^{\A , n }(w,y) \mmm(dz)\mmm(dw)
\\ \notag =&
\int _{\AAAA _k \times \AAAA _{k+1}} 
\K ^{\A ,k}(x,z) \KR ^{\A , n}(w,y) 
\Bigl(
\KR (z,w)-\K (z,w)
\Bigr)
\mmm(dz)\mmm(dw)
.\end{align}
By the Schwartz inequality for the last term, we have 
\begin{align}\notag&
\Bigl|
\LR  ^{\A ,k, n+1 }(x,y)-\LR ^{\A ,k+1,n} (x,y)
\Bigr|^2
\\\notag \leq&
\int _{\AAAA _{k}\times\AAAA _{k+1}} 
|\K ^{\A , k}(x,z) \KR ^{\A , n}(w,y)|^{2} 
\mmm(dz)\mmm(dw) 
%
\int _{\AAAA _k \times \AAAA _{k+1}} 
|\KR (z,w)-\K (z,w)|^2
\mmm(dz)\mmm(dw)
.\end{align}
Hence, 
\begin{align}\label{:31k}&
\int_{\AAAA _0 \times \AAAA _m }\Bigl|
\LR  ^{\A ,k,n+1 }(x,y)-\LR ^{\A , k+1 , n }  (x,y) 
\Bigr|^2 \mmm(dx) \mmm(dy)
\\ \notag \leq &
\int _{\AAAA _0 \times \AAAA _k } | \KAk (x,z) |^2 \mmm(dx) \mmm(dz) 
\int _{\AAAA _{k+1} \times \AAAA _{m}} | \KR ^{\A , n}(w,y) |^2 \mmm(dw) \mmm(dy)
\\ \notag &\times 
\int _{\AAAA _{k} \times \AAAA _{k+1}} |\KR(z,w)-\K(z,w)|^2 \mmm(dz)\mmm(dw)
.\end{align}
Recall that $ k+n=m-1$. 
Then $ 0 \leq n \leq m-1 $. 
Let $ \A ^{\prime} =  \AAAA _{k+1}\times  \cdots  \times \AAAA _m $ 
and $ (k ^{\prime},n ^{\prime})$ be such that $ k ^{\prime} + n ^{\prime} =n$. 
Then by replacing $ m$ by $ n$ in \eqref{:31i} we have 
\begin{align}\label{:31l}
\sup_{R} \int _{\AAAA _{k+1}\times \AAAA _m} 
|\LR ^{\A ^{\prime} , k ^{\prime} , n ^{\prime} } (w,y) |^2 \mmm(dw)\mmm(dy) < \infty
.\end{align}
Take $ (k ^{\prime},n ^{\prime}) = (0,n)$. 
Then $ \LR ^{\A ^{\prime} , 0 , n} (x,y) = \KR ^{\A ,n}(x,y)$ by \eqref{:31g}. 
Hence from \eqref{:31l} 
\begin{align}\notag 
\sup_{R} \int _{\AAAA _{k+1}\times \AAAA _m} 
|\KR ^{\A , n} (w,y) |^2 \mmm(dw)\mmm(dy) < \infty
.\end{align}
From this, \eqref{:31e}, and \lref{l:31} \thetag{2}, the last term in \eqref{:31k} goes to zero as $ R \to \infty$. 
Therefore, we see that 
\begin{align}\notag& 
\Bigl(
\int_{\AAAA _0 \times \AAAA _m }
\Bigl|
\LR ^{\A , k , n+1 } (x,y) - \KAm (x,y)
\Bigr|^2 
\mmm  (dx)\mmm (dy)
\Bigr)^{\frac{1}{2}}
\\ \leq & \notag 
\sum_{p=0}^{n}
\Bigl(
\int_{\AAAA _0 \times \AAAA _m }
\Bigl|
\LR ^{\A , k+p , n+1-p}  (x,y) - \LR  ^{\A ,k+p+1, n-p}(x,y)
\Bigr|^2 \
\mmm(dx) \mmm(dy)
\Bigr)^{\frac{1}{2}}
\\ \notag \to & 
0  \text{ as } R \to \infty
.\end{align}
Hence \eqref{:31h} holds for $ m $. 

We deduce \eqref{:31i} for $ m $ from \eqref{:31h} for $ m $ immediately. 

We now apply \eqref{:31i} to obtain \thetag{3}. Let $ \sigma =(1,2,\ldots,m)$. 
\begin{align}\label{:31m}&
\int_{\mathbb{A}  } 
\Bigl\{
\prod_{p=1}^m 
\KR (x_p,x_{\sigma(p)})
-
\prod_{p=1}^m 
\K (x_p,x_{\sigma(p)})
\Bigr\}
\mmm ^{m}(d\mathbf{x})
\\ \notag =&
\sum _{k=0}^{m-1}
\int_{\AAAA _m  } 
\Bigl\{
\LR ^{\A , k ,m-k} (x,x)
-
\LR ^{\A ,k+1, m-k-1} (x,x)
\Bigr\}
\mmm (dx)
.\end{align}
Let $ k+n=m$ and $ n \geq1$. 
Then 
\begin{align}\notag & 
\int_{\AAAA _m  } 
\Bigl\{
\LR ^{\A , k ,n } (x,x)
-
\LR ^{\A ,k+1, n-1} (x,x)
\Bigr\}
\mmm (dx)
\\\notag =&
\int _{\AAAA _m}
\Bigl(
\int _{\AAAA _k \times \AAAA _{k+1}}
\Bigl\{
\K ^{\A , k} (x,z) \KR (z,w) \KR ^{\A , n-1}(w,x)
\\\notag &
\quad \quad \quad \quad \quad \quad \quad 
-
\K ^{\A , k} (x,z) \K (z,w) \KR ^{\A , n-1}(w,x)
\Bigr\}
\mmm (dz)\mmm(dw)
\Bigr)
\mmm (dx)
\\\notag  =&
\int _{\AAAA _k \times \AAAA _{k+1}}
\int _{\AAAA _{m}}
\K ^{\A , k}(x,z) \KR ^{\A , n-1}(w,x)
\mmm(dx)
\Bigl(\KR (z,w)-\K (z,w)\Bigr)
\mmm (dz)\mmm(dw)
.\end{align}
By the Schwarz inequality, 
\begin{align} \notag &
\Bigr|
\int_{\AAAA _m  } 
\Bigl\{
\LR ^{\A , k ,n } (x,x)
-
\LR ^{\A ,k+1, n-1} (x,x)
\Bigr\}
\mmm (dx)
\Bigl|
\\\notag \leq&
\Bigl(
\int _{\AAAA _k \times \AAAA _{k+1}}
\Bigl|
\int _{\AAAA _{m}}
\K ^{\A , k}(x,z) \KR ^{\A , n-1}(w,x)
\mmm(dx)
\Bigr| ^2
\mmm (dz)\mmm(dw)
\Bigr)^{\frac{1}{2}}
\\\notag &\times
\Bigl(
\int _{\AAAA _k \times \AAAA _{k+1}}
|\KR (z,w)-\K (z,w)|^2
\mmm (dz)\mmm(dw)
\Bigr)^{\frac{1}{2}}
.\end{align}
Recall that $ k+n=m$. Then $ k+n-1=m-1$. 
From \eqref{:31i} for $ m-1 $ and \lref{l:31} \thetag{2}, the last term goes to zero as $ R \to \infty$. 
This combined with \eqref{:31m} implies \eqref{:31x}. 
\end{proof}

Let $ \lambdal $ be the counting measure on $ \IIIl $ as before. 
We can regard $ \KFl $ as an operator on 
$ L^2(\IIIl  ,\lambdal ) $
such that $ \KFl \p  (\ii ) = \sum_{\jj \in \IIIl } \KFl (\ii ,\jj ) \p  (\jj ) 
$. 
We now prove that the $ (\KFl ,\lambdal ) $-determinantal point 
process $ \nul $ exists. 
\begin{lmm} \label{l:32} 
Let $ \mathrm{Spec}( \KFl  ) $ be the spectrum of $ \KFl $. Then  
\begin{align}\label{:32a}&
 \mathrm{Spec}( \KFl  ) \subset [0,\infty)
.\end{align}
If $ \alpha <0 $, 
\begin{align}\label{:32aa}&
 \mathrm{Spec}( \KFl  ) \subset [0,-\frac{1}{\alpha}]
.\end{align}
In particular, 
there exists a unique, $\alpha$-determinantal point process $ \nuFl $ on $ \IIIl $
associated with $ (\KFl ,\lambdal ) $. 
\end{lmm}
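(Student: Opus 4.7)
The strategy, already foreshadowed in the introduction, is to realize $\KFl$ as the matrix of $\K$ expressed in the orthonormal basis $\FFFl$. Concretely, I would introduce the map $U : L^{2}(\Sit,\mmm) \to L^{2}(\IIIl,\lambdal)$ that sends $f$ to its sequence of Fourier coefficients
\begin{align*}
(Uf)(\ii) = \int_{\Sit} f(x) \flii(x)\,\mmm(dx),
\end{align*}
taking the basis $\FFFl$ to be real-valued (as in Example~\ref{d:22}, so that the conjugation convention in \eqref{:21v} is unambiguous). Because $\FFFl$ is an orthonormal basis of $L^{2}(\Sit,\mmm)$, Parseval's identity together with completeness make $U$ a unitary operator with inverse $U^{-1} g = \sum_{\ii \in \IIIl} g(\ii)\flii$.

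Next I would verify the intertwining identity $\KFl = U \K U^{-1}$ on $L^{2}(\IIIl,\lambdal)$. This is a direct substitution: writing $U^{-1}g$ as the basis expansion, applying $\K$, and then extracting the Fourier coefficient at $\ii$ by applying $U$ reproduces, term by term, the right-hand side of \eqref{:21v}. Hence $\KFl$ is unitarily equivalent to $\K$, so in particular $\mathrm{Spec}(\KFl) = \mathrm{Spec}(\K)$. The spectral hypotheses of \thetag{A1} on $\K$ then yield \eqref{:32a} in general and \eqref{:32aa} when $\alpha < 0$.

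Finally, to invoke the existence and uniqueness theorem of \cite{s-t.jfa} for the $\alpha$-determinantal point process $\nuFl$ associated with $(\KFl,\lambdal)$, one must check the remaining conditions of \thetag{A1} for this pair. Hermitian symmetry of $\KFl$ is inherited from $\K$, either via the unitary equivalence or directly from the symmetry of \eqref{:21v}. The locally trace class requirement is automatic on the discrete space $(\IIIl,\lambdal)$ with counting measure, since every relatively compact subset is finite and any restriction of $\KFl$ to such a subset is a finite matrix, hence trivially trace class. I do not anticipate any serious obstacle; the content of the lemma is essentially the preservation of the spectrum under a unitary change of representation.
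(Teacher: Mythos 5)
Your proposal is correct and follows essentially the same route as the paper: the paper also defines the unitary $U:L^{2}(\Sit,\mmm)\to L^{2}(\IIIl,\lambdal)$ sending $\flii$ to the canonical basis vector $e_{\ell,i}$, obtains $\KFl = U\K U^{-1}$ (citing Lemma \ref{l:31} for the intertwining rather than verifying it by direct substitution), and deduces \eqref{:32a} and \eqref{:32aa} from \thetag{A1} before invoking Theorem 1.2 of \cite{s-t.jfa}. Your additional explicit check that $\KFl$ is Hermitian and locally trace class on the discrete space is a harmless elaboration of what the paper leaves implicit.
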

\begin{proof}
Recall that $\6 $ is an orthonormal basis of $ L^2(\Sit ,\mmm )$. 
Let $ \map{U}{L^2(\Sit ,\mmm )}{L^2(\IIIl , \lambdal  )}$ be the unitary operator such that 
$ U (\flii ) = e_{\ell ,i}$, where 
$ \{e_{\ell ,i} \}_{i\in\IIIl }$ 
is the canonical orthonormal basis of $L^2(\IIIl , \lambdal  )$. 
Then by \lref{l:31} we see that $  \KFl  = U \K U^{-1}$. 
Hence $ \KFl $ and $ \K $ have the same spectrum. 
We thus obtain \eqref{:32a} and \eqref{:32aa} from \thetag{A1}.  
Because $ \KFl  $ is Hermitian symmetric, 
the second claim is clear from \eqref{:32a}, \eqref{:32aa}, \thetag{A1}, 
and Theorem 1.2 of 
\cite{s-t.jfa}. 
\end{proof}

\begin{lmm} \label{l:33} 
Let $ \Bli = \mathrm{supp} (\flii )$ be as in \eqref{:21t}.  
Then, for $ \ii , \jj  \in \IIIl $ and $ \AAAA \in \Fl  $, 
\begin{align}\label{:33a}&
\int_{ \AAAA } \flii (x){\fljj (x)} \mmm (dx) =
\begin{cases} 1 & (\ii = \jj ,\ \Bli  \subset \AAAA )
\\
0 & (\text{otherwise})
\end{cases}
.\end{align}
\end{lmm}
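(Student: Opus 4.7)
The plan is to reduce the claim to the orthonormality relations \eqref{:30a}--\eqref{:30b} of $\FFFl$ together with a simple dichotomy describing how each support $\Bli$ sits inside the $\sigma$-field $\Fl$.

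I would begin by verifying the following bookkeeping observation: for every $\ii \in \IIIl$ the support $\Bli$ is contained in a single atom of $\Dl$. When $\mathrm{rank}(\ii) = 1$ this is immediate from \eqref{:21j} and \eqref{:21u}, since then $\Bli = \Ali \in \Dl$. When $r := \mathrm{rank}(\ii) \ge 2$, \eqref{:21u} combined with \eqref{:20j} identifies $\Bli$ with an atom of the partition $\Delta(\ell + r - 2)$, and iterating \eqref{:10b} shows that $\Delta(\ell + r - 2)$ refines $\Dl$; hence $\Bli$ lies in a unique element of $\Dl$.

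Since any $\AAAA \in \Fl$ is a disjoint union of atoms of $\Dl$, the observation yields the dichotomy
$$\Bli \subset \AAAA \quad \text{or} \quad \Bli \cap \AAAA = \emptyset.$$
Combined with \eqref{:21t}, which says that $\flii$ vanishes outside $\Bli$, this dichotomy reduces $\int_\AAAA \flii \, \overline{\fljj} \, \mmm(dx)$ either to the full inner product $\int_\Sit \flii \, \overline{\fljj} \, \mmm(dx)$ or to $0$, depending on whether both supports are contained in $\AAAA$ or at least one is disjoint from it.

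A short case analysis then finishes the proof. If $\ii = \jj$, then the sub-case $\Bli \subset \AAAA$ gives $\int_\AAAA |\flii|^2 \mmm(dx) = \int_\Sit |\flii|^2 \mmm(dx) = 1$ by \eqref{:30a}, while $\Bli \cap \AAAA = \emptyset$ immediately gives $0$. If $\ii \neq \jj$, then the integrand either vanishes identically on $\AAAA$ (whenever at least one of $\Bli, \Bljj$ is disjoint from $\AAAA$) or, when both are contained in $\AAAA$, integrates to $\int_\Sit \flii \, \overline{\fljj} \, \mmm(dx) = 0$ by \eqref{:30b}. The only real obstacle is the first step, which is purely bookkeeping: tracing the shift map $\theta_{\ell-1,r}$ through the definitions of $\Bli$ and $\mathrm{rank}$. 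Once this is set up, everything reduces cleanly to orthonormality of $\FFFl$ and the disjointness of distinct $\Dl$-atoms.
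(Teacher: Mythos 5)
Your proposal is correct and follows essentially the same route as the paper's proof: the dichotomy $\Bli \subset \AAAA$ or $\Bli \cap \AAAA = \emptyset$ for $\AAAA \in \Fl$, combined with the support property \eqref{:21t} and the orthonormality relations \eqref{:30a}--\eqref{:30b}. You merely make explicit the bookkeeping (via \eqref{:20j}, \eqref{:21u}, and the refinement property \eqref{:10b}) that the paper's proof invokes in one line.
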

\begin{proof} 
We recall that $\Bli $ is the support of $ \flii $ by \eqref{:21t}. 
Suppose $ \ii = \jj $ and $ \Bli \subset \AAAA $. Then 
from \eqref{:30a} we obtain 
\begin{align}\label{:33b}&
\int_{ \AAAA } \flii (x){\fljj (x)} \mmm (dx) = 
\int_{\Sit } \flii (x){\flii (x)} \mmm (dx) = 1 
.\end{align}
Suppose that $ \ii = \jj $ and that $ \Bli \not\subset \AAAA $. 
Then, using $ \AAAA \in \Fl  $, \eqref{:20j}, and \eqref{:21u}, 
we deduce that $ \Bli \cap \AAAA = \emptyset $. 
Because $ \Bli = \mathrm{supp} (\flii ) $, we obtain 
\begin{align}\label{:33c}&
\int_{ \AAAA } \flii (x){\fljj (x)} \mmm (dx) = 0 
.\end{align} 
Finally, suppose $ \ii \not= \jj $. Because $ \AAAA \in \Fl  $,  
we see that $ \Bli \subset \AAAA $ or $ \Bli \cap \AAAA = \emptyset $. 
The same also holds for $ \Blj $. In any case, we obtain \eqref{:33c} 
from \eqref{:30b}.  
From \eqref{:33b} and \eqref{:33c}, we obtain \eqref{:33a}. 
\end{proof}

\noindent {\em Proof of \tref{l:21}. }
Let $ \mathbb{A}=  
\AAAA _1\ts \cdots \ts \AAAA _m $ 
as in \tref{l:21}. Then, 
because $\AAAA _n \in \Dl  $ for all $ n = 1,\ldots,m $, we deduce 
from \eqref{:21z}, \eqref{:20b}, and \eqref{:10a}  that 
\begin{align}\label{:34a} 
\int_{\mathbb{A} } 
 \rhol ^m(\mathbf{x}) \mmm ^{m} & (d\mathbf{x}) 
= 
\int_{\mathbb{A} } 
 \deta[\K(x_p,x_q)]_{p,q=1}^{m}
\mmm ^{m}(d\mathbf{x}) 
,\end{align}
where $ \mathbf{x}=(x_1,\ldots,x_m)$. 
From a straightforward calculation and \lref{l:31}, we obtain 
\begin{align}\label{:34p}&
\int_{\mathbb{A}  } 
\deta[
\K(x_p,x_{q})
]_{p,q=1}^m
\mmm ^{m}(d\mathbf{x}) \quad 
\\ 
=\notag & 
\int_{\mathbb{A}  } 
\sum_{\sigma \in \mathfrak{S}_m } 
\alpha ^{m-\nu(\sigma)}
\prod_{p=1}^m 
\K (x_p,x_{\sigma(p)})
\mmm ^{m}(d\mathbf{x}) 
\\ 
=\notag & 
\sum_{\sigma \in \mathfrak{S}_m } 
\alpha ^{m-\nu(\sigma)}
\int_{\mathbb{A}  } 
\prod_{p=1}^m 
\K (x_p,x_{\sigma(p)})
\mmm ^{m}(d\mathbf{x}) 
\\ =\notag & 
\sum_{\sigma \in \mathfrak{S}_m } 
\alpha ^{m-\nu(\sigma)}
\limi{R}
\int_{\mathbb{A}  } 
\prod_{p=1}^m 
\KR (x_p,x_{\sigma(p)})
\mmm ^{m}(d\mathbf{x}) 
,\end{align}
where $ \KR $ is defined by \eqref{:31y}. 
We note that $ \cup_{i=1}^m \Ai $ is relatively compact. 
Hence the last line in \eqref{:34p} follows from 
\lref{l:31} \thetag{3}. 
\begin{align}\label{:34t}&
\int_{\mathbb{A}  } 
\prod_{p=1}^m 
\KR (x_p,x_{\sigma(p)})
\mmm ^{m}(d\mathbf{x}) 
\\ =\notag & 
\int_{\mathbb{A}  } 
\prod_{p=1}^m 
\Big(
\sum_{\iip \in \III (\ell ; R )   } 
 \KFl (\iip ,\jjp ) 
\fliip (x_p) 
{\fljjp (x_{\sigma (p)})}
 \Big)
\mmm ^{m}(d\mathbf{x}) 
\\ =\notag &
\int_{\mathbb{A}  } 
\Big(
\sum_{\iii ,\,  \jjj \in \III (\ell ; R ) ^m   } 
\prod_{p=1}^m 
 \KFl (\iip ,\jjp ) 
\fliip (x_p) 
{\fljjp (x_{\sigma (p)})}
 \Big)
\mmm ^{m}(d\mathbf{x}) 
= : J(R)
\end{align}
Here, $ \iii = (\ii _1,\ldots,\ii _m) , 
\jjj = (\jj _1,\ldots,\jj _m) \in \IIIl ^m $. 
From  \lref{l:33}, 
\begin{align}\label{:34t2}
J(R) &= 
\int_{\mathbb{A}  } 
\Big(
\sum_{\iii ,\,  \jjj \in \III (\ell ; R ) ^m   } 
\prod_{p=1}^m 
 \KFl (\iip ,\jjp ) 
\fliip (x_p) 
{f_{\ell ,\jj _{\sigma^{-1} (p )}} (x_p) }
 \Big)
\mmm ^{m}(d\mathbf{x}) 
\\ \notag 
= &
\int_{\mathbb{A}  } 
\Big(
\sum_{\iii \in \III (\ell ; R ) ^m \cap \IIIl (\mathbb{A})  } 
\prod_{p=1}^m 
 \KFl (\iip ,\ii _{\sigma (p)} ) 
|\fliip (x_p) |^2
 \Big)
\mmm ^{m}(d\mathbf{x}) 
\\ \notag 
= &
\sum_{\iii \in \III (\ell ; R ) ^m \cap \III _{\ell} (\mathbb{A})  } 
\prod_{p=1}^m 
 \KFl (\iip ,\ii _{\sigma (p)} ) 
\to
\sum_{\iii \in \III _{\ell} (\mathbb{A}) } 
\prod_{p=1}^m 
 \KFl (\iip ,\ii _{\sigma (p)} ) 
\quad \text{ as } R \to \infty 
.\end{align}
The convergence in the last line follows from 
\lref{l:31} \thetag{2} and the Schwarz inequality. 
Multiplying $ \alpha ^{m-\nu(\sigma)}$ and summing over 
$ \sigma \in \mathfrak{S}_m $ in the last term, we see that 
\begin{align}\label{:34r} & 
%
%
\sum_{\sigma \in \mathfrak{S}_m } \alpha ^{m-\nu(\sigma)}
\sum_{\iii \in \III _{\ell} (\mathbb{A}) } 
\prod_{p=1}^m 
 \KFl (\iip ,\ii _{\sigma (p)} ) 
\\ = \notag &
\sum_{\iii \in \III _{\ell} (\mathbb{A}) } 
\sum_{\sigma \in \mathfrak{S}_m } \alpha ^{m-\nu(\sigma)} 
\prod_{p=1}^m 
\KFl (\iip ,\ii _{\sigma (p)} ) 
\\ = \notag & 
\sum_{\iii  \in \III_{\ell }(\mathbb{A}) }
\deta[ \KFl (\iip ,\iiq ) 
]_{p,q=1}^m 
\\ = \notag &
\sum_{\iii   \in \III_{\ell }(\mathbb{A}) }
\rhoFl ^m (\iii ) 
.\end{align}
Combining \eqref{:34a}--\eqref{:34r} 
we deduce \eqref{:21b}, which completes the proof. 
\qed

\section{Proof of \tref{l:22} and \tref{l:23} }\label{s:4}

\subsection{Proof of \tref{l:22}} 
Let $ \varrho ^m $ be the $ m $-point correlation function of $\8 $. 
Then it suffices for \eqref{:22a} to prove 
\begin{align}\label{:41z}& 
\rhol ^m (\mathbf{x}) = \varrho ^m (\mathbf{x})  
.\end{align}

From \eqref{:20e} and $ \mathfrak{F}\kl = \sigma [ \Ali ; i\in \Il  ]$, 
we see that $ \rhol ^m $ and $ \varrho ^m $ are $ \Fl ^m $-measurable. 
Let $  m = m_1+\cdots + m_k $. Let 
$\mathbb{A} = \AAAA _1^{m_1}\ts \cdots \ts \AAAA _k^{m_k} \in\Dl ^m $ 
such that 
$ \AAAA _p \cap \AAAA _q = \emptyset $ if $ p\not=q $. 
Let 
$ \iii = (i_n)_{n=1}^m = 
(\iii _1,\ldots,\iii _k)\in \IIIl ^m  $ 
such that $ \mathbf{i}_n \in \III (\ell )^{m_n} $. 
From \tref{l:21}, we see that 
\begin{align}\label{:41a}
\int_{\mathbb{A}} \rhol ^m(\mathbf{x}) \mmm ^{m}(d\mathbf{x}) = 
\sum_{\iii \in \III_{\ell }(\mathbb{A}) } \rhoFl ^m (\iii ) 
.\end{align}
By the definition of correlation functions, 
\eqref{:22w}, and \eqref{:22x}, we see that 
\begin{align}\label{:41d}
\sum_{\iii \in \III_{\ell }(\mathbb{A}) } 
\0  
=&
\int _{\IIl }\prod_{n=1}^k
\frac{\mathsf{i}  (\III_{\ell }(\AAAA _n)) ! }
{(\mathsf{i}  (\III_{\ell }(\AAAA _n)) -m_n)!} 
\nul (d\mathsf{i})
\\ \notag 
=& 
\int_{\Ssf }
\prod_{n=1}^k
\frac{\sss  (\AAAA _n ) ! }{(\sss  (\AAAA _n ) -m_n)!}  
\8 (d\sss )
\\ \notag 
=&
\int_{\mathbb{A}} 
\varrho ^m (\mathbf{x}) \mmm ^m (d\mathbf{x})
.\end{align}
Combining \eqref{:41a} and \eqref{:41d}, we deduce that 
\begin{align}\label{:41e}&
\int_{\mathbb{A}} 
\rhol ^m (\mathbf{x}) \mmm ^m (d\mathbf{x}) =
\sum_{\iii \in \III_{\ell }(\mathbb{A}) } \0  
= 
\int_{\mathbb{A}} 
\varrho ^m (\mathbf{x}) \mmm ^m (d\mathbf{x})
.\end{align}
From \eqref{:41e}, we obtain \eqref{:41z}. This completes the proof 
of \tref{l:22}. 
\qed

\subsection {Proof of \tref{l:23}} 
Let $ \mathsf{A} \in \Gl$. 
From \tref{l:22} and 
regular conditional probability of $ \7 $ with respect to $ \sigma [ \iotal ]$, we see that 
\begin{align}\label{:l23}
\mul (\mathsf{A}) 
=& 
(\7 ) \circ \ulabl ^{-1} |_{ \Gl } (\mathsf{A}) 
\\ 
=& \notag
\int_{\IIl }(\7 ) \circ \iotal ^{-1} (d\mathsf{i}) 
\, \, 
\7 ( \ulabl ^{-1}    (\mathsf{A})  | \iotal (\omega ) = \mathsf{i}) 
\\ 
=& \notag
\int_{\IIl }(\7 ) \circ \iotal ^{-1} (d\mathsf{i}) 
\, \, 
\7 ( \kappal ^{-1} \circ \ulabi ^{-1}    (\mathsf{A})  | \iotal (\omega ) = \mathsf{i}) 
\\ 
=& \notag
\int_{\IIl }\nuFl (d\mathsf{i}) \,  \mmFii \circ \ulabi ^{-1}    (\mathsf{A}) 
\\ 
=& \notag
\int_{\IIl }\nuFl (d\mathsf{i}) \,  \mmFii ^{\ulab }    (\mathsf{A}) 
.\end{align}
Here the forth line in \eqref{:l23} follows from the fact 
$ \ulabl ( \omega ) = \ulabi ( \kappal ( \omega ) )$ 
for each $ \omega  \in \4 $ with $ \iota _{\ell} (\omega)= \mathsf{i}$. 
From \eqref{:l23}, we obtain \tref{l:23}. 
\qed

\section*{Acknowledgment}
We express sincere thanks to Professor Ryoki Fukushima for his valuable comments 
in 15TH Stochastic Analysis on Large Scale Interacting Systems 
at Tokyo University. 
That was whether the results for the determinantal point processes in \cite{o-o.tail} also holds for the $ \alpha$-determinantal point processes.


\end{document}